\newcommand{\mcm}[3]{\newcommand{#1}[#2]{{\ensuremath{#3}}}} 
\mcm{\tuple}{1}{\langle #1 \rangle}
\mcm{\name}{1}{\ulcorner #1 \urcorner}
\mcm{\Nbb}{0}{\mathbb{N}}
\mcm{\Zbb}{0}{\mathbb{Z}}
\mcm{\Rbb}{0}{\mathbb{R}}
\mcm{\Cbb}{0}{\mathbb{C}}
\mcm{\Qbb}{0}{\mathbb{Q}}
\mcm{\Acal}{0}{\cal A}
\mcm{\Bcal}{0}{\cal B}
\mcm{\Ccal}{0}{\cal C}
\mcm{\Dcal}{0}{\cal D}
\mcm{\Ecal}{0}{\cal E}
\mcm{\Fcal}{0}{\cal F}
\mcm{\Gcal}{0}{\cal G}
\mcm{\Hcal}{0}{\cal H}
\mcm{\Ical}{0}{\cal I}
\mcm{\Jcal}{0}{\cal J}
\mcm{\Kcal}{0}{\cal K}
\mcm{\Lcal}{0}{\cal L}
\mcm{\Mcal}{0}{\cal M}
\mcm{\Ncal}{0}{\cal N}
\mcm{\Ocal}{0}{{\cal O}}
\mcm{\Pcal}{0}{{\cal P}}
\mcm{\Qcal}{0}{{\cal Q}}
\mcm{\Rcal}{0}{{\cal R}}
\mcm{\Scal}{0}{{\cal S}}
\mcm{\Tcal}{0}{{\cal T}}
\mcm{\Ucal}{0}{{\cal U}}
\mcm{\Vcal}{0}{{\cal V}}
\mcm{\Wcal}{0}{{\cal W}}
\mcm{\Xcal}{0}{{\cal X}}
\mcm{\Ycal}{0}{{\cal Y}}
\mcm{\Zcal}{0}{{\cal Z}}
\mcm{\Mfrak}{0}{\mathfrak M}
\mcm{\restric}{0}{\upharpoonright}
\mcm{\upset}{0}{\uparrow}
\mcm{\onto}{0}{\twoheadrightarrow}
\mcm{\smallNbb}{0}{{\small \mathbb{N}}}
\DeclareMathOperator{\preop}{op}
\mcm{\op}{0}{^{\preop}}
\newcommand{\theoremize}[2]{\newaliascnt{#1}{thm} \newtheorem{#1}[#1]{#2} \aliascntresetthe{#1}}
\theoremstyle{plain}
\newtheorem{thm}{Theorem}[section]
\theoremstyle{definition}
\theoremstyle{plain}
\begin{document}
\title{Outerspatial 2-complexes:  
\newline
Extending the class of outerplanar graphs to three dimensions}
\author{Johannes Carmesin, Tsvetomir Mihaylov\\ University of Birmingham}

\maketitle

\begin{abstract}
We introduce the class of outerspatial $2$-complexes as the natural generalisation of the class of outerplanar graphs to three dimensions. 
Answering a question of O-joung Kwon, 
we prove that a locally $2$-connected $2$-complex is outerspatial if and only if it does not contain a surface of positive genus as a subcomplex and does not have a space minor that is a generalised cone over $K_4$ or $K_{2,3}$.

This is applied to nested plane embeddings of graphs; that is, plane embeddings constrained by conditions placed on a set of cycles of the graph.

\end{abstract}

\section{Introduction}

An important class of planar graphs is the class of \emph{outerplanar graphs}, those graphs with plane embeddings having a face containing all vertices. Kuratowski's\footnote{Wagner's characterisation is stated in terms of minors while Kuratowski characterisation is stated in terms of subdivisions. They are equivalent, but since Kuratowski result comes first, we refer to either statement as Kuratowski's theorem.} characterisation of planar graphs in terms of excluded minors implies a characterisation of outerplanar graphs in terms of excluded minors. 

This paper is part of a project aiming to extend theorems from planar graph theory to three dimensions. The starting point of this project is \cite{JC1}. In there a three-dimensional analogue of Kuratowski's theorem was proved: embeddability of simply connected 2-complexes in the 3-sphere was characterised by excluded \lq space minors\rq. O-joung Kwon asked\footnote{In private communication.} whether a similar result is true for a natural higher dimensional analogue of outerplanar graphs. 

One of the equivalent definitions for a graph to be outerplanar is that the $1$-dimensional cone over it is planar. We will take this definition one dimension higher and will call a $2$-complex \emph{outerspatial} if the $2$-dimensional cone over it is embeddable in $\mathbb{R}^3$.
Hence outerspatial $2$-complexes are the natural generalisation of outerplanar graphs to three dimensions. The main result of this paper answers the above mentioned question of O-joung Kwon affirmatively, and is the following.

\begin{thm} \label{mainintro}
A locally $2$-connected\footnote{For a definition of locally $2$-connected look at \autoref{l2c}} simple $2$-complex~$C$ is outerspatial if and only if it does not contain a surface of positive genus or a space minor with a link graph that is not outerplanar. 
\end{thm}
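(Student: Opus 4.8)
The plan is to reduce everything to the embeddability characterisation proved in \cite{JC1}, exploiting the fact that the $2$-dimensional cone $\hat C$ over any $2$-complex $C$ is contractible, and in particular simply connected (coning the $1$-skeleton kills the image of $\pi_1$ of the $1$-skeleton, which surjects onto $\pi_1(C)$). The crucial observation is a translation between the links of $\hat C$ and the links of $C$: the link of the apex records the $1$-skeleton of $C$, while the link of an original vertex $u$ is precisely the graph-theoretic cone over the link $L_u$ of $u$ in $C$, obtained by adding one new vertex joined to every vertex of $L_u$. Since a $2$-complex embeds in $S^3$ only if each of its vertex links embeds in $S^2$, and since the cone over a graph $H$ is planar if and only if $H$ is outerplanar, we obtain at once that outerspatiality forces every link of $C$ to be outerplanar.

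For the necessity direction I would first record that outerspatiality is closed under taking space minors: a space-minor operation on $C$ lifts to one on $\hat C$, so the minor-monotonicity of embeddability from \cite{JC1} transfers to $\hat C$. Combined with the link computation above, this shows that no space minor of an outerspatial complex can carry a non-outerplanar link. For the surface condition, suppose $C$ contains a closed surface $S$ of positive genus and that $\hat C$ embeds in $S^3$. Then $S$ embeds in $S^3$ and the apex $v$ lies in some complementary region $R$; the cone from $v$ over any cycle $\gamma$ of $S$ is an embedded disk in $R$ bounded by $\gamma$, so \emph{every} cycle of $S$ bounds a disk on the $v$-side. But for a positive-genus surface in $S^3$ some essential curve fails to bound a disk in either complementary region, a contradiction. (Non-orientable $S$ are excluded even more directly, as they do not embed in $S^3$ at all.)

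The hard direction is sufficiency. Here I would again invoke simple-connectivity of $\hat C$, so that by \cite{JC1} it embeds in $S^3$ if and only if it has no space minor on the forbidden list of that paper; it therefore suffices to prove the contrapositive, that any forbidden space minor of $\hat C$ produces either a positive-genus surface inside $C$ or a space minor of $C$ with a non-outerplanar link. This is the step where local $2$-connectivity is indispensable: $2$-connectivity of the links pins down their structure (a $2$-connected outerplanar graph is a polygon with non-crossing chords), which yields coherent cyclic orders along the edges of $C$ and lets one read off a spherical combinatorial embedding whenever no forbidden configuration is present.

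The main obstacle I anticipate is exactly this last translation: controlling how a space-minor contraction of $\hat C$ can involve the apex and its incident cone-faces, and showing that the resulting obstruction, when it is not detected locally as a non-outerplanar link, must arise from a genuinely global torus- or higher-genus configuration in $C$. Disentangling the local (link) obstructions from the global (surface) obstruction, and verifying that together they account for the entire forbidden list of \cite{JC1}, is where the real content lies; the remainder is the bookkeeping of matching each excluded space minor against a generalised cone over $K_4$ or $K_{2,3}$, or against a positive-genus surface.
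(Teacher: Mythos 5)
Your necessity half is sound in outline and close in spirit to the paper's: the identification of the link at an original vertex of $\widehat C$ with the $1$-dimensional cone over the link in $C$, together with ``cone planar iff base outerplanar,'' is exactly the paper's \autoref{cone_link}/\autoref{link_outerplanar}. Two points need attention, though. First, $\widehat C$ is \emph{not} contractible in general: the $2$-dimensional cone only cones the $1$-skeleton (for $C$ a sphere, $H_2(\widehat C)\neq 0$); you only ever use simple connectivity, which you justify correctly, so this is a harmless slip. Second, your surface argument rests on the unproved assertion that a positive-genus surface in $S^3$ has an essential curve bounding a disk in neither complementary region. What your coning construction actually yields is stronger and easier to exploit: every cycle of the $1$-skeleton of $S$ (and these carry all of $H_1(S)$, since the faces are glued along cycles) cones to an embedded disk on the \emph{same} side $R$, so the kernel of $H_1(S)\to H_1(\overline R)$ is everything, contradicting the fact that this kernel is a Lagrangian (rank $g$, isotropic) subspace of $H_1(S)$ --- equivalently, two curves meeting once cannot both bound on one side. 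With that patch your argument works and is genuinely different from the paper's, which proves \autoref{a-sphere-cone} combinatorially by comparing the rotation system of the $1$-skeleton embedded in the surface with that induced at the top link (and which also covers non-orientable surfaces uniformly).

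The genuine gap is the sufficiency direction, which in your proposal is a plan rather than a proof: you reduce to the excluded-minor list of \cite{JC1} applied to $\widehat C$ and then explicitly defer the translation of those minors back into structures of $C$ --- but that translation \emph{is} the theorem. Space minors of $\widehat C$ may contract through the apex and mix cone faces with faces of $C$, and the paper's own remark warns that the characterisation obtained by pushing the 3D Kuratowski theorem through the cone ``seems overly complicated''; avoiding this route is the point of the paper. The actual proof is direct and never invokes \cite{JC1}'s excluded minors: local $2$-connectedness plus outerplanarity of the links of all $C/P$ makes every link $2$-outerplanar, i.e.\ a Hamilton cycle with chords (\autoref{polygonal}); \autoref{chordal_are_perfect} shows, via a vertex-sum computation producing a $K_{2,3}$ in the link of a contracted edge, that every chordal face is chordal at \emph{all} its endvertices; deleting all chordal faces leaves a complex $D$ whose links are cycles, hence (absent aspherical subcomplexes) a sphere; a $K_4$-in-a-link argument (\autoref{boundaries_are_nested}) shows the deleted face boundaries are nested in the resulting plane embedding; and the equivalence outerspatial $\Leftrightarrow$ outerspherical $\Leftrightarrow$ nested plane embedding (\autoref{plane_to_outerspatial}) reassembles $C$. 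None of these steps --- in particular the chordal-face analysis and the nestedness argument, which is where the hypothesis of local $2$-connectedness actually does its work --- appears in your proposal, so as it stands the ``if'' direction is missing.
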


The obstructions given in the theorem above are necessary: firstly, the torus has genus one and any $2$-complex homeomorphic to a torus is not outerspatial by  \autoref{a-sphere-cone} below. Secondly, cones over non-outerplanar graphs have link graphs that are not outerplanar and thus are not outerspatial $2$-complexes by \autoref{link_outerplanar} below.

Using the three-dimensional Kuratowski characterisation \cite[Theorem 1.3]{JC1} one can obtain a characterisation of the class of outerspatial $2$-complexes in terms of excluded minors. However, the set in question cannot be defined clearly and is too large to be of practical use. Our main result, \autoref{mainintro}, provides a simple and short forbidden structures characterisation.

In order to have such a short list as in \autoref{mainintro}, the assumption of local $2$-connectedness is also necessary. Indeed, triangulations of the torus with a single disc removed (and various slight modifications of higher genus surfaces) are excluded minors of the class of outerspatial $2$-complexes. Hence for this super-class the list of excluded minors is much more complicated than that of \autoref{mainintro}. 

\begin{eg}
Consider a $2$-complex built by gluing a set of triangulated $2$-spheres step by step such that at each step the gluing set is a single face. Any $2$-complex built this way is an example of a locally $2$-connected outerspatial $2$-complex. We will show in \autoref{sec5} that all such $2$-complexes can be constructed in this way.
\end{eg}
\begin{eg} \label{bing}
The topological space Bing House, as described in \cite[Chapter 0]{hatcher}, has only one chamber but it is not outerspatial. It is not outerspatial because it has a minor which has a link graph $K_4$.
\end{eg}

\begin{rem}\label{rem99}
Throughout this paper, faces of $2$-complexes are bounded by genuine cycles (of arbitrary length), as restricting to faces of size three makes the question one-dimensional. See \autoref{main_naive_lemma} for details. 
\end{rem}

In this paper we find a correspondence between particular plane embeddings and outerspatial embeddability in $3$-space, as follows. 
Given a set of cycles $\Ccal$ in a plane graph $G$, we say that the pair $(G,\Ccal)$ has a \emph{nested plane embedding} if $G$ has an embedding in the plane such that any two cycles in $\Ccal$ do not intersect internally (for a more precise definition, look at \autoref{nested_emb}). 
The $2$-complex \emph{associated to $(G,\Ccal)$} is the $2$-complex whose 1-skeleton is $G$ and whose set of faces is $\Ccal$. We prove the following connection between outerspatial $2$-complexes and nested plane embeddings. 

\begin{cor} 
A graph $G$ together with a set of cycles $\mathcal{C}$ has a nested plane embedding if and only if its associated $2$-complex is outerspatial.
\end{cor}

This result follows from \autoref{plane_to_outerspatial}. Given this corollary, \autoref{mainintro} can be applied directly to characterise the existence of nested plane embeddings of graphs.

{\bf Related Results.} The two most important concepts of this paper are embeddings of $2$-complexes in $3$-space and nested plane embeddings of graphs. Our methods for embedding $2$-complexes is related to and based on the series of papers on this topic \cite{JC1,JC2,JC3,JC4,JC5}. Some previous works related to nested plane embeddings focus on the triangle case. Such special types of nested plane embeddings are studied in papers \cite{nested_h,min_1,min_2}. The first one explores properties of graphs in relation to structural information on these nested triangles. The other two papers use nestedness as a tool to find an example of a minimal area straight line drawings of planar graphs. The term `laminar' is a general notion relating to sets, but is also used with the same meaning as our definition in terms of nested plane embeddings. Its usage in the context of cycles is motivated by the fact that the interiors of the faces bounded by a set of laminar cycles form a family of laminar subsets of $\mathbb{R}^2$. Laminar cycles play central part in the papers \cite{lamgoe,lamfio,lamasad,lamepp}. In \cite{lamgoe} the main problem is finding a minimum-weight set of vertices that meets all cycles in the subset. There the authors optimise an algorithm that they have found over laminar sets of cycles.  In \cite{lamfio} the aim is to bound the number of odd cycle vertex packings by the number of odd cycle vertex transversal. A main idea in proving this is considering laminar sets of odd cycles. In \cite{lamasad} laminar cycles are used to count $3$-colourings of triangle-free planar graphs. In \cite{lamepp} laminar cycles are used to find maximal sets of laminar $3$-separators in $3$-connected planar graphs. In \cite{induced_packing} it was shown that if a set of nested cycles in a graph satisfies some further properties, then this graph has a packing of $k$ odd cycles if and only if $G-v$ does for some specific $v$. There is also the notion of simply nested $k$-outerplanar graphs, which is somewhat related to our project; see \cite{simply_nested} for definitions.

The structure of this paper is as follows.  In the second section we give some basic definitions and prove some initial results. In the third section we build up to and state the Core Lemma -- \autoref{core_lemma}, which is the key component of the proof of \autoref{mainintro}. In the fourth section we prove the main techniques needed for the Core Lemma and we complete its proof and consequently prove \autoref{mainintro}. The fifth section is devoted to deriving some properties of locally $2$-connected simple outerspatial $2$-complexes following from our results.

\section{Basic definitions and initial approaches} \label{section_defns}

We start this section by giving basic definitions related to $2$-complexes. Next, we will explore various ways of defining the concept of outerspatiality and provide a brief explanation as to why we believe our definition to be the most effective. Then we start building the theory needed to prove the main theorem. At the end we show a proposition which on its own proves a rudimentary version of the main result, but the idea behind it is also quite useful later on.

Let us note that in this paper when we talk about a graph, it is assumed that it can have parallel edges and loops. We will now define what a $2$-complex is.
\begin{defn} \label{complex}
A \emph{$2$-complex} is a graph $G=(V,E)$ together with a set $F$ of cycles, called its \emph{faces}.

\end{defn}
\begin{rem}
    In this paper, we will assume that all edges of a $2$-complex~$C$ lie on some face of~$C$.
\end{rem}
\begin{defn}
We will call a $2$-complex \emph{simple} if it (that is, its underlying graph) does not have loops or parallel edges.\footnote{Parallel faces are not relevant to the question of embeddability. That is why we do not mention them in the definition of \emph{simple} 2-complexes.}
\end{defn}

\begin{eg}
A $2$-dimensional simplicial complex is a simple $2$-complex where all faces have three edges.
\end{eg}
\begin{defn}
The $1$-skeleton of a complex $C = (V,E,F)$ is the graph $G=(V,E)$.
\end{defn}

A notion that underlies this paper is that of space minors, the $3$-dimensional analogue of graph minors.

\begin{defn} \label{minoring}
A \emph{space minor\footnote{The systematic study of the minor relation was initiated by Wagner.}} of a $2$-complex is obtained by successively performing one of these two operations.
\begin{enumerate}
\item contracting an edge that is not a loop;
\item deleting a face (and all edges or vertices only incident with that face);
\end{enumerate}
\end{defn}

\begin{rem} \label{pres}
For detailed discussion on these operations and a proof that they are well-founded and preserve embeddability in 3-space, see \cite{JC1}.
\end{rem}

% The very first and most naive attempt at defining an analogue of the class of outerplanar graphs to three dimensions lead us to the definition of weakly outerspatial complexes.

% \begin{defn}
% A $2$-complex~$C$ is \emph{weakly outerspatial} if there is an embedding of~$C$ such that the outer chamber of this embedding is incident to all of the edges of the $2$-complex.
% \end{defn}

The aim of this paper is to extend the notion of outerplanarity of graphs to three dimensions. Before we do this, we need a definition of outerplanarity that translates well to $2$-complexes. There are two ways of defining outerplanar graphs that suit our purposes. One is to find an `outer' face containing all vertices and the other is through planarity of the cone. These two definitions are shown below.
\begin{defn}
Let $G$ be a graph. Take the disjoint union of $G$ and an additional vertex $t$ and connect $t$ to all vertices of $G$ by an edge. The resulting graph is called the \emph{(1-dimensional) cone} over $G$ and the vertex $t$ is called the top of the cone.
\end{defn} 
\begin{defn}\label{OP1}
(Outerplanarity criterion 1)
A graph $G$ is outerplanar if it can be embedded in the plane in such a way that there is a face of the embedding containing all vertices of $G$.
\end{defn}

\begin{defn}\label{OP2}
(Outerplanarity criterion 2)
A graph $G$ is outerplanar if the $1$-dimensional cone over $G$ is planar.
\end{defn}

The fact that these two definitions are equivalent is a well-known result. A proof sketch goes as follows. 

If a graph $G$ is outerplanar by criterion $1$, we can embed it in the plane so that there exists a face containing all vertices of~$G$. Then we can add a vertex on the interior of this face and connect it to all vertices of this face and hence all vertices of the graph. Thus, we embedded the cone over $G$ in the plane, which shows that criterion $1$ implies criterion $2$.

If $G$ is outerplanar by criterion $2$, we can embed the cone over $G$ in the plane. When we delete the top of the cone, the connected component of the point corresponding to the deleted vertex is the interior of a face that contains all vertices of $G$. So there is a face containing all vertices of $G$, which shows that criterion $2$ implies criterion $1$.

\begin{defn}
Consider a vertex $v$ of the $2$-complex~$C$ and define the following graph. Its vertices are the edges of~$C$ incident to $v$ and two vertices of~$L(v)$ are connected by an edge if their corresponding edges in~$C$ lie on the same face. This graph is called the \emph{link graph} of $v$ and is denoted by~$L(v)$.
\end{defn}

\begin{defn} \label{l2c}
A $2$-complex whose link graphs are all $k$-connected simple graphs is called \emph{locally $k$-connected}.
\end{defn}

\begin{defn}
Given a 2-complex~$C$ without loops, the \emph{(2-dimensional) cone} over~$C$ is the following 
2-complex. It is obtained from~$C$ by adding a single vertex (referred to as the \emph{top} of the cone), 
one edge for every vertex of~$C$ from that vertex to the top, and one triangular face for every edge 
$e$ of~$C$ whose endvertices are the endvertices of $e$ and the top. 

We denote the 2-dimensional cone over a $2$-complex~$C$ by $\widehat C$.  
\end{defn}
\begin{obs}
The link graph at the top of a cone of a $2$-complex~$C$ is equal to the $1$-skeleton of~$C$.
\qed
\end{obs}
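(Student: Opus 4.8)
The plan is to unwind the definition of the link graph directly, since the statement is essentially a bookkeeping identity between the combinatorial data of $\widehat{C}$ at its top and the graph $G=(V,E)$ underlying $C$. Recall that the link graph at a vertex $t$ of a $2$-complex has one vertex for each edge incident to $t$, and one edge for each corner at $t$, that is, for each face incident to $t$ together with each pair of consecutive edges of that face's boundary cycle meeting at $t$. The whole proof will then consist of exhibiting a bijection on vertices and a bijection on edges that together respect incidences.

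First I would read off the edges incident to the top $t$. By the definition of the cone, the only edges added at $t$ are the cone edges $tv$, one for each vertex $v\in V$, and no original edge of $C$ is incident to $t$. Hence the assignment $v\mapsto tv$ is a bijection from $V$ onto the set of edges at $t$, which is precisely the vertex set of the link graph at $t$. This identifies the vertices of the link graph with the vertices of the $1$-skeleton of $C$.

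Next I would read off the edges of the link graph. The faces incident to $t$ are exactly the triangular faces of the cone, one for each edge $e=uv\in E$, and no original face of $C$ is incident to $t$. Since $C$ has no loops, the triangle over $e$ has boundary cycle through $t,u,v$ with $u\neq v$, and its only corner at $t$ is formed by the two consecutive edges $tu$ and $tv$. This corner contributes a single edge of the link graph joining the link-vertices $tu$ and $tv$, which under the bijection above is the edge joining $u$ and $v$. Thus $e$ maps to the corner of its triangle at $t$, giving a bijection from $E$ onto the edges of the link graph that, under the vertex identification, sends $e=uv$ to an edge between $u$ and $v$.

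Combining the two bijections shows the link graph at $t$ is isomorphic to $(V,E)$, the $1$-skeleton of $C$, as claimed. There is no genuine obstacle here: the statement is an observation precisely because it follows by inspection from the definitions. The only point requiring care is to confirm that each cone triangle meets $t$ in exactly one corner, so that it contributes exactly one link edge in the expected position, and that the original faces of $C$ play no role at $t$; both are immediate from the hypotheses that $C$ has no loops and that the top is a newly added vertex disjoint from $C$.
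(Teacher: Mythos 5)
Your proof is correct and is exactly the routine definitional check the paper intends: the paper states this as an observation with no written proof, and your bijections (vertices $v \mapsto$ cone edges $tv$, edges $e=uv \mapsto$ the unique corner at $t$ of the triangle over $e$) match the paper's own later description of link graphs in its Sublemma~\ref{cone_link} argument. Nothing to flag; your care about loops and about each triangle contributing exactly one corner at $t$ is appropriate and consistent with the paper's hypothesis that cones are only formed over loopless complexes.
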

\begin{obs}
2-dimensional cones are always simply connected.\qed
\end{obs}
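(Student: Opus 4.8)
The plan is to establish simple connectedness of $\widehat C$ directly, by checking that it is connected and has trivial fundamental group via the standard presentation of $\pi_1$ of a $2$-complex. Write $G=(V,E)$ for the $1$-skeleton of $C$ and let $t$ denote the top of the cone. Connectivity is immediate: every vertex $v\in V$ is joined to $t$ by its spoke edge $vt$, so $\widehat C$ is connected regardless of whether $C$ itself is connected.

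For the fundamental group I would base it at $t$ and take as a spanning tree $T$ of the $1$-skeleton of $\widehat C$ the star $\{\,vt : v\in V\,\}$ of all the spoke edges. Since $C$ has no loops and every vertex of $\widehat C$ other than $t$ already lies in $V$, this star visits each vertex exactly once and so is a genuine spanning tree. In the resulting presentation, $\pi_1(\widehat C,t)$ has one generator $\gamma_e$ for each edge $e=uv$ of $G$ (the edges outside $T$), represented by the loop running along the spoke $tu$, then $e$, then the spoke $vt$, and one relator for each face of $\widehat C$. The crucial step is then to observe that the triangular face coning $e$, whose boundary cycle is $t\,u\,v$, contributes exactly the relator $\gamma_e=1$, since its other two boundary edges are tree edges. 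Hence every generator is killed outright, so $\pi_1(\widehat C,t)$ is trivial; the remaining faces of $\widehat C$, namely those inherited from $C$, only impose further relations and therefore cannot disturb triviality. This gives that $\widehat C$ is simply connected.

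This argument is essentially routine, so I do not expect a genuine obstacle; the only points needing care are bookkeeping. First, I must make sure parallel edges of $C$ (permitted by our graph convention) are handled: two parallel edges between $u$ and $v$ yield two distinct generators, but each is coned by its own triangle and hence separately killed. Second, I should be explicit that the possibly non-triangular faces inherited from $C$ add relators that can only shrink $\pi_1$ further, never enlarge it. As an alternative high-level sanity check one may note that the subcomplex consisting of $G$, the spokes and the coning triangles is precisely the topological cone over the graph $G$ — each product square $e\times[0,1]$ degenerates to the triangle $tuv$ at the apex — which is contractible and in particular simply connected; attaching the inherited faces preserves this. I would keep the explicit combinatorial computation as the main proof, since it stays within the $2$-complex language used throughout and makes the vanishing of each generator manifest.
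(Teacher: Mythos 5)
Your proof is correct, but note that the paper offers no argument at all for this observation --- it is stated with a \qed{} as immediate, the implicit one-line justification being exactly your ``sanity check'': the union of the $1$-skeleton $G$, the spokes and the coning triangles is the topological cone over $G$, hence contractible, and attaching the faces inherited from $C$ (which are $2$-cells glued along their boundary circles) can only quotient $\pi_1$ further, so $\pi_1(\widehat C)$ stays trivial since the $1$-skeleton of $\widehat C$ already carries all of $\pi_1$. Your main argument via the standard presentation of $\pi_1$ of a $2$-complex (spanning star $\{vt: v\in V\}$, one generator $\gamma_e$ per edge $e$ of $G$, and the coning triangle $tuv$ contributing precisely the relator $\gamma_e=1$) is a valid and fully explicit version of the same fact; what it buys is that it stays entirely within the combinatorial $2$-complex language of the paper, makes the killing of each generator manifest, and handles the bookkeeping for parallel edges (each gets its own triangle and relator) and for disconnected $C$ (the cone is connected through $t$) without appeal to topology. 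The only caution is that the presentation argument needs the cone to be defined, i.e.\ $C$ loopless, which you correctly invoke; beyond that there is no gap, and either route would serve as a proof if the paper chose to spell one out.
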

\begin{defn}
The \emph{geometric realisation} of a $2$-complex $C=(V,E,F)$ is the topological space obtained by gluing discs to the geometric realisation of the graph $G=(V,E)$ along the face boundaries.
\end{defn}  
\begin{defn}
A (topological) embedding of a simplicial complex~$C$ into a topological space $X$ is an injective continuous map from (the
geometric realisation of)~$C$ into $X$. We say that a $2$-complex is \emph{embeddable} in $\mathbb{R}^3$ if its geometric realisation is embeddable in $\mathbb{R}^3$ as a topological space.
\end{defn}
\begin{rem}
If a $2$-complex is embeddable in $\mathbb{R}^3$ we will say as a shorthand that the $2$-complex is \emph{embeddable}.
\end{rem}
Now we are ready to give the definition of an outerspatial $2$-complex. We have the two definitions of outerplanarity \autoref{OP1} and \autoref{OP2}. We can make two different definitions for outerspatial $2$-complexes based on them.

\begin{defn} (Outerspatiality criterion 1)
A $2$-complex~$C$ is \emph{weakly outerspatial} if there is an embedding of~$C$ in $\mathbb R^3$ such that some chamber of this embedding is incident to all of the edges of the $2$-complex.
\end{defn}
\begin{defn}(Outerspatiality criterion 2)
A 2-complex is \emph{outerspatial} if its 2-dimensional cone embeds in 3-space. 
\end{defn}

It would be best if these two definitions were equivalent in the same way the two outerplanarity definitions are. However, this is not the case. It turns out that outerspatiality criterion $2$ is a stronger definition as shown by the lemma below.
\begin{lem}
If a $2$-complex~$C$ is outerspatial, then it is also weakly outerspatial.
\end{lem}
\begin{proof}
Consider an embedding of the cone over~$C$ in $\mathbb{R}^3$; assume that the cone is embedded in the outer face. Delete the top of the cone with all incident edges and faces. The chamber where the top was includes all faces incident with the top and thus has all edges of~$C$ in its boundary. Thus, this defines a weakly outerspatial embedding of~$C$.
\end{proof}

We showed that outerspatiality implies weak outerspatiality. To show that it is a strictly stronger definition we need an example of $2$-complex which is outerspatial but not weakly outerspatial. This is the Bing house -- \autoref{bing}.

For this paper we have chosen the second definition of outerspatial (\autoref{OP2}) as it is more general and yields more interesting and relevant characterisations.

Below, we will need to use a notion of inside and outside of a sphere. The following theorem provides the definition that we want.
\begin{thm}\label{jordan}
(Jordan–Brouwer separation theorem \cite{JB}) Any compact, connected hypersurface $X$ in $\mathbb R^n$ will divide $\mathbb R^n$ into two connected regions; the `outside' $D_0$ and the `inside' $D_1$. Furthermore, $\bar{D}_1$ is itself a compact manifold with boundary $\partial \bar{D}_1=X$.
\end{thm}
\begin{defn}
The \emph{interior of a cycle} in a plane embedding of a graph is the inside of its image as defined in \autoref{jordan}.
\end{defn}

\begin{obs} \label{link_planar}
All the link graphs of an embeddable $2$-complex are planar; for example, see \cite[Section 3]{JC2}.\qed
\end{obs}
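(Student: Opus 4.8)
The plan is to realise each link graph as a graph drawn without crossings on a small $2$-sphere sitting inside the given embedding, and then to invoke the classical fact that a finite graph embeds in the $2$-sphere precisely when it is planar. Throughout, recall that a neighbourhood of a vertex $v$ in the geometric realisation of a $2$-complex is \emph{cone-like}: it is homeomorphic to the cone over the link graph $L$ at $v$, with $v$ playing the role of the cone point. This is really just a restatement of the definition of the link, but it is the geometric fact that drives the argument.

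First I would fix a topological embedding $\phi$ of the geometric realisation of $C$ into $\mathbb{R}^3$ and a vertex $v$ of $C$. I would then choose a radius $\epsilon>0$ small enough that the closed ball $B$ of radius $\epsilon$ about $\phi(v)$ meets the image $\phi(C)$ only in the images of cells incident to $v$, and so that its bounding sphere $S=\partial B$ crosses each of these cells cleanly: every edge incident to $v$ should meet $S$ in a single point, and every face incident to $v$ should meet $S$ in a single arc for each passage of its boundary cycle through $v$, with the two endpoints of such an arc lying on the two edges of that face that meet at $v$.

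Granting such an $\epsilon$, the points on $S$ coming from the edges at $v$ and the arcs on $S$ coming from the faces at $v$ together form a drawing of $L$ on the sphere $S$. Since $\phi$ is injective, arcs arising from distinct corners are disjoint except where they are forced to share an endpoint on a common edge, so this drawing has no crossings; that is, it is an embedding of $L$ into $S\cong S^2$. Because a finite graph embeds in $S^2$ if and only if it is planar, $L$ is planar, and as $v$ was arbitrary every link graph of $C$ is planar.

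The crux of the argument, and the step I expect to require the most care, is the existence of a good radius $\epsilon$: in the purely topological category embeddings can be \emph{wild}, so one cannot simply appeal to transversality. The cleanest remedy is to lean on the local cone structure directly, taking a neighbourhood basis of $v$ consisting of cones over $L$ and observing that, under any embedding, the frontier of a sufficiently small such neighbourhood is an embedded copy of $L$ on a topological $2$-sphere. Alternatively, since the geometric realisation is a finite polyhedron one may first reduce to a piecewise-linear embedding and then obtain $S$ by a standard transversality argument; this is the route followed in the cited \cite[Section~3]{JC2}.
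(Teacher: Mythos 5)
The paper offers no proof of this observation at all: it is stated as a known fact, with the closing square and a pointer to \cite[Section 3]{JC2}, and your sketch is precisely the argument that citation stands for --- intersect the embedded complex with a small sphere about the vertex, read off a crossing-free drawing of the link graph $L$ on that sphere, and conclude planarity from the equivalence of sphere- and plane-embeddability. So in substance your proposal is correct and matches the intended (outsourced) proof. One caution, though: of the two remedies you offer for wildness, only the second actually closes the gap. The frontier of a small cone neighbourhood of $v$ \emph{inside the complex} is indeed a copy of $L$, but under a wild topological embedding its image is merely an embedded copy of $L$ in $\mathbb{R}^3$, which need not lie on any topological $2$-sphere; and an embedding of a graph in $\mathbb{R}^3$ carries no planarity information, since every finite graph embeds in $3$-space. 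So ``leaning on the local cone structure directly'' does not by itself produce the sphere $S$ you need. The correct route is the one you state second: topological embeddability of a $2$-complex in a $3$-manifold is equivalent to piecewise-linear embeddability (which is why embeddings are taken PL by convention in \cite{JC1,JC2}), and for a PL embedding a sufficiently small sphere about the vertex in general position meets the complex exactly in a drawing of $L$, completing the argument as you describe.
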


\begin{defn}
Consider two cycles embedded in the plane. We say that they \emph{intersect internally} if their interiors have a proper non-empty intersection.
\end{defn}
\begin{defn} \label{nested_emb}
Consider a planar graph $G$ with a set of cycles $\Ccal$. We say that a plane embedding of $G$ is \emph{nested} if no two cycles in $\Ccal$ intersect internally.

\end{defn}

\begin{defn} \label{outerspherical}
Take a graph embedded in a 2-sphere $S$ which is in turn embedded in the Euclidean space $\mathbb{R}^3$. Glue discs to cycles of this graph inside the sphere so that they do not intersect each other in interior points, and they intersect the sphere $S$ precisely in the gluing cycles. Call a topological space that can be obtained in this way \emph{outerspherical}.

\end{defn}

\begin{defn} \label{candidate_outerspatial}
We will call a $2$-complex~$C$ \emph{outerspherical}, if it has an embedding in $\mathbb{R}^3$ that is an outerspherical topological space, where the $1$-skeleton of~$C$ is mapped to the graph in the sphere and the faces of~$C$ are mapped to the discs glued to the graph as described in \autoref{outerspherical}.
\end{defn}
\begin{defn}
Consider a graph $G$ with a set of vertices $V$. We will call a cyclic orientation of the edges incident to a vertex $v\in V$ a \emph{rotator} at $v$.
\end{defn}
Before we start with the next lemma, we need a definition which will help us differentiate between the faces of a graph and the faces of a $2$-complex.
\begin{defn}
    Consider an embedding of a planar graph $G$ on the sphere $\mathbb S^2$. We will call the connected components of $\mathbb S^2 \backslash G$ the \emph{facets} of this embedding.
\end{defn}
\begin{lem}\label{plane_to_outerspatial}
Let~$C$ be a $2$-complex. Then the following are equivalent
\begin{enumerate} [(1)]
\item $C$ is outerspherical
\item $C$ is outerspatial
\item The $1$-skeleton of~$C$ together with the set of face boundaries of~$C$ has a nested plane embedding.
\end{enumerate}
\end{lem}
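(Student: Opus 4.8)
The plan is to prove the cyclic chain $(1)\Rightarrow(2)\Rightarrow(3)\Rightarrow(1)$, so that all three conditions become equivalent. Two of these steps are direct geometric constructions, while the step $(2)\Rightarrow(3)$ carries the real content. For $(1)\Rightarrow(2)$, assume $C$ is outerspherical, so its $1$-skeleton $G$ sits on a round $2$-sphere $S\se\Rbb^3$ and the faces of $C$ are realised as pairwise internally disjoint discs in the bounded component $B$ of $\Rbb^3\setminus S$, each meeting $S$ exactly in its boundary cycle (as in \autoref{outerspherical}). The exterior of $S$ in $S^3=\Rbb^3\cup\{\infty\}$ is again a closed ball $B'$ with $\partial B'=S$; I would place the top $t$ of the cone at its centre and realise every cone edge $vt$ as the radial arc from $v\in S$ to $t$, and every cone face over an edge $e=uv$ as the radial cone over $e$. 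Radial arcs from a common centre meet only at $t$, and the radial cones over distinct edges meet only along shared such arcs, so this embeds $\widehat G$ into $B'$. Since $B$ and $B'$ share only $S$, the discs of $C$ (lying in $B$) and the cone apparatus (lying in $B'$) combine along $G\se S$ into a single injective map, giving an embedding of $\widehat C$ in $S^3$; projecting off an unused point embeds it in $\Rbb^3$, so $C$ is outerspatial.

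For $(3)\Rightarrow(1)$, start from a nested plane embedding of $G$ together with its face-boundary cycles and transport it to a round sphere $S$ by inverse stereographic projection, so that $G$ lies on $S$ and its face cycles form a laminar family of circles on $S$; here laminarity is exactly nestedness by \autoref{internal_nested}, since nestedness means that no two of the cycles intersect internally. It then remains to cap each face cycle by a disc in the bounded ball $B$ so that the discs are pairwise internally disjoint and meet $S$ only in their boundary cycles, which is precisely the outerspherical topological space of \autoref{outerspherical}. I would build these discs by induction on the nesting depth of the laminar family: an innermost cycle bounds a region into which a shallow disc can be pushed just below $S$, and each deeper cycle is capped by a disc nested appropriately relative to the discs already placed for the cycles it contains. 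When two face cycles share edges or vertices the corresponding discs must share exactly those boundary arcs while remaining internally disjoint; \autoref{internal_embeddable} is then used to certify that the resulting complex is genuinely embeddable.

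For $(2)\Rightarrow(3)$, suppose $\widehat C$ embeds in $\Rbb^3$, viewed inside $S^3$. Take a small sphere $S_\epsilon$ about the top $t$; its intersection with $\widehat C$ is the link graph of $t$, which by the observation that the link graph at the top of a cone equals the $1$-skeleton is a copy $\bar G$ of $G$ embedded on $S_\epsilon$, and reading $S_\epsilon$ as a $2$-sphere yields a plane embedding of $G$. I must show the face cycles are nested in it. For a face $f$ with boundary cycle $\gamma$, the union $\Sigma_f:=f\cup\widehat\gamma$ of the face with the cone over $\gamma$ is an embedded $2$-sphere through $t$, which by \autoref{jordan} separates $S^3$ into two balls; its trace on $S_\epsilon$ is exactly the link circle $\bar\gamma$, so $\bar\gamma$ splits $S_\epsilon$ into the two discs lying on the two sides of $\Sigma_f$. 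Given two faces $f_1,f_2$, I would track on which side of $\Sigma_{f_1}$ each vertex of $\bar\gamma_2$ lies: if the whole of $\bar\gamma_2$ stays on one side of $\Sigma_{f_1}$ then $\bar\gamma_2$ lies inside a single region of $S_\epsilon\setminus\bar\gamma_1$, which is nestedness, and translating ``one side'' into ``interior'' and invoking \autoref{internal_nested} delivers condition $(3)$.

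The main obstacle is precisely this last nestedness argument. The clean case is when $\gamma_1$ and $\gamma_2$ are disjoint: then $\Sigma_{f_1}\cap\Sigma_{f_2}$ is the single point $t$, and after a small perturbation near $t$ the sphere $\Sigma_{f_2}$ lies entirely in one of the two balls cut out by $\Sigma_{f_1}$, forcing $\bar\gamma_2$ onto one side of $\bar\gamma_1$. The difficulty is that face cycles of a $2$-complex routinely share edges and vertices, so in general $\Sigma_{f_1}\cap\Sigma_{f_2}$ is a nontrivial $1$-complex through $t$ rather than a point; making the side-tracking rigorous here, and ruling out genuine interleaving without assuming the conclusion, is the delicate step. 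I expect it to require a careful local analysis of the two embedded spheres along their shared cone structure, together with repeated appeals to \autoref{jordan} and \autoref{internal_nested}.
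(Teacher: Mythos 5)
Your constructions for $(1)\Rightarrow(2)$ and $(3)\Rightarrow(1)$ are correct and essentially the paper's own (top of the cone placed in the exterior ball with the cone apparatus embedded there; faces glued inductively inside the sphere following the laminar structure). The genuine gap is exactly where you flag it: in $(2)\Rightarrow(3)$ your separating-sphere argument via $\Sigma_f = f\cup\widehat\gamma$ only yields nestedness when the two boundary cycles are disjoint, and you explicitly leave open the case where they share vertices or edges --- which is the typical situation for face boundaries of a $2$-complex, so the proof is incomplete precisely on its main case. Moreover this case cannot be rescued by perturbation: when $\gamma_1$ and $\gamma_2$ share edges, the spheres $\Sigma_{f_1}$ and $\Sigma_{f_2}$ intersect in a $1$-complex of common cone arcs and boundary edges, $\bar\gamma_2$ genuinely meets $\bar\gamma_1$ on $S_\epsilon$, and ``lying on one side of $\Sigma_{f_1}$'' is not even well defined along the shared arcs. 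What must be ruled out is a \emph{crossing}, which is a condition on the cyclic order of edges at a shared vertex, not a disjointness condition, so a global side-tracking argument is the wrong tool.

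The paper closes this step with a local rotation-system argument that your proposal is missing. If the two face boundaries are not nested in the induced plane embedding of $L(t)\cong G$, then by \autoref{jordan} they cross at some vertex $v$ of $L(t)$: there are edges $e_1,e_2,e_3,e_4$ at $v$ appearing in this interleaved order in the rotator, with $e_1,e_3$ on one cycle and $e_2,e_4$ on the other. Now $v$, as an edge of $\widehat C$, is a cone edge $tw$; pass to the link graph $L(w)$ at its endvertex $w\neq t$. There the rotator at the vertex corresponding to $tw$ agrees with the one in $L(t)$ up to reversal, and --- this is the key use of the cone's triangular faces --- the two faces of $C$ contribute edges $x_1$ joining the far endvertices of $e_1$ and $e_3$, and $x_2$ joining those of $e_2$ and $e_4$. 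The resulting six-edge subgraph (two triangles sharing $v$ with interleaved rotator, see \autoref{nonplanar_eg}) admits no planar embedding with that rotator, contradicting \autoref{link_planar} applied to the embedded complex $\widehat C$. So the missing idea is to convert a crossing in the top's link into a non-planar link at an ordinary vertex of $C$, rather than to separate the two spheres $\Sigma_{f_1},\Sigma_{f_2}$ globally; with that substitution your outline becomes the paper's proof.
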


\begin{proof}

For the $(1)\implies(2)$ implication, consider an outerspherical $2$-complex~$C$. There exists an embedding of it in $\mathbb R^3$ such that the unit sphere intersects~$C$ in its 1-skeleton and everything else of~$C$ is embedded in the interior of the unit ball. Let $G$ be the plane embedding of the 1-skeleton of~$C$ of this embedding in the unit ball.
The cone over $\mathbb{S}^2$ is the full 3-dimensional unit ball. Assume that $G$ is drawn onto $\mathbb{S}^2$. This way we obtain an embedding of the cone over $G$ in the full unit ball that takes the embedding $G$ into $\mathbb{S}^2$ at the boundary.
Glue these two full unit balls by gluing at $G$ to obtain an embedding of the cone over~$C$ in $\mathbb R^3$. This shows that~$C$ is outerspatial.

For the $(2)\implies(3)$ implication, consider a $2$-complex~$C$ and suppose that it is outerspatial. In other words, its cone $\widehat{C}$ is embeddable. Consider an embedding of $\widehat{C}$ and let the top of this cone be $t$. The embedding of $\widehat{C}$ induces an embedding of the link graph at the top in the 2-sphere. As the link graph at the top is the 1-skeleton of~$C$, the embedding of $\widehat{C}$ induces an embedding of the $1$-skeleton of~$C$ in the plane. Call that embedding $\iota$. 
Denote the 1-skeleton of~$C$ by $G$ and the set of face boundaries of~$C$ by $F$. 

\begin{sublem}\label{sublem2}
The embedding $\iota$ of $G$ is a nested plane embedding for the set $F$ of cycles. 
\end{sublem}

\begin{proof}
Suppose not for a contradiction.
As the embedding $\iota$ is plane by construction, there are two cycles in the set $F$ that are not nested; that is, they intersect internally.
So by \autoref{jordan}, there is a vertex $v$ of the link graph $L(t)$ such that the cycles intersect internally at $v$. That is, there are edges $(e_i|i\in\Zbb_4)$ of $L(t)$ incident with $v$ that appear in that order at the rotator at $v$ such that $e_1$ and $e_3$ are in one of the cycles of $F$ and $e_2$ and $e_4$ are in the other of these cycles. 

Now we find a vertex $w$ of~$C$ such that the embedding of the link graph $L(w)$ induced by the embedding of $\widehat C$ is not planar; this will be the desired contradiction. 
Let $w$ be the endvertex of $v$, considered as an edge of the cone $\widehat{C}$, aside from the top $t$. 
Consider the link graph $L(w)$. Now $v$ is an edge of $L(w)$, and also the edges $e_i$ are edges of $L(w)$ incident with $v$; and the rotator at $v$ is the same as in $L(t)$ (up to reversing).
Here, however, as the edges $e_1$ and $e_3$ are in a common face of~$C$, their endvertices in $L(w)$ aside from $v$ are joined by an edge, call it $x_1$. Similarly, the endvertices of the edges $e_2$ and $e_4$ aside from $v$ are joined by an edge; call it $x_2$.
The subgraph of $L(w)$ with the six edges $(e_i|i\in \Zbb_4)$ and $(x_i|i=1,2)$ is not planar with the specified rotator at $v$, see \autoref{nonplanar_eg}. This is the desired contradiction. 
\end{proof}

\begin{figure}[ht] 
\centering
\includegraphics[width=\textwidth]{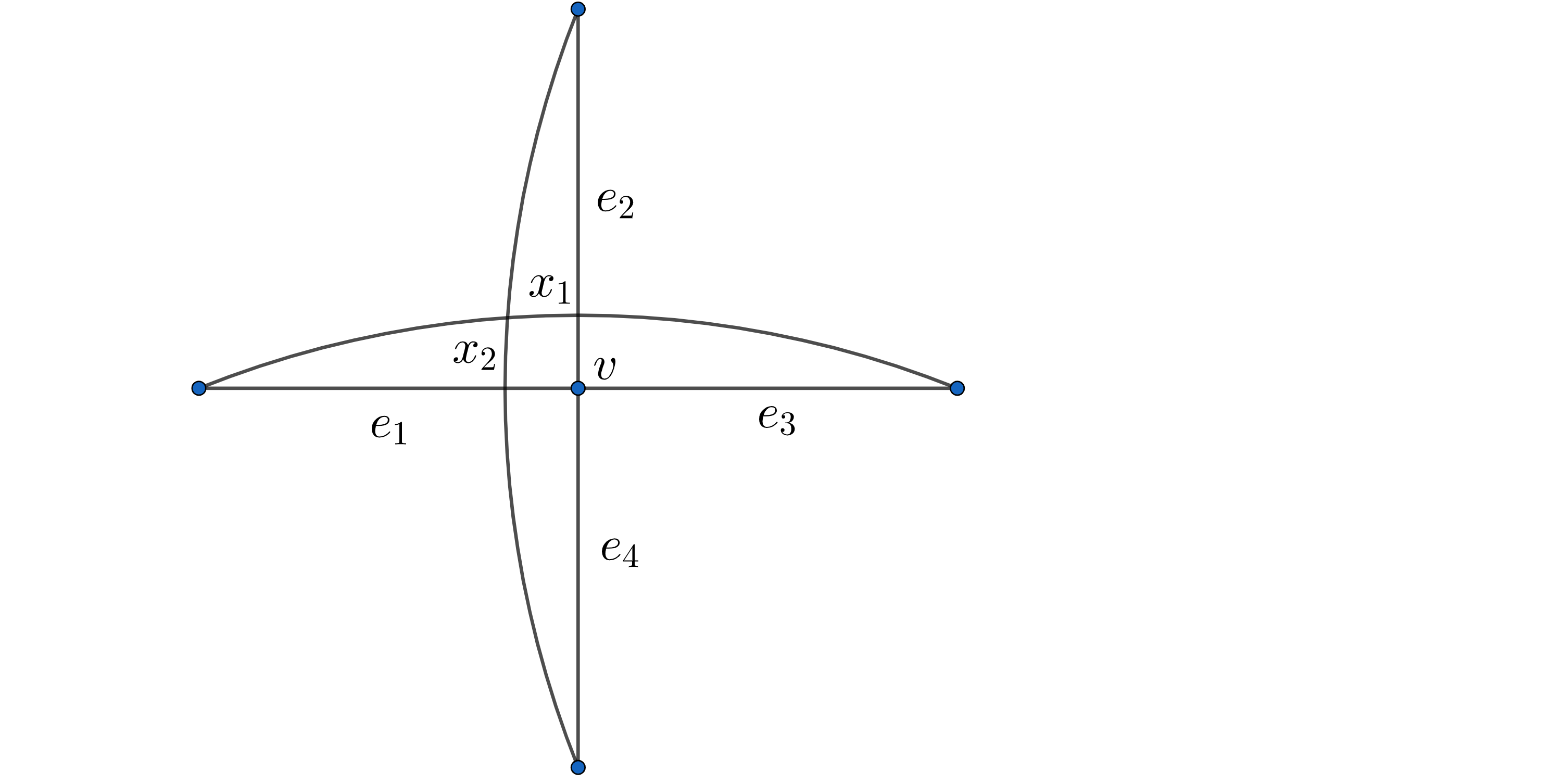}
\caption{A subgraph of the link graph $L(w)$ that is not planar for the induced rotator at the vertex $v$.}
\label{nonplanar_eg}
\end{figure}

We started with the assumption that~$C$ is outerspatial and we obtained the result from \autoref{sublem2}. This completes the $(2)\implies(3)$ implication. 

For the $(3) \implies (1)$ implication suppose that we have a complex~$C$ such
that its 1-skeleton $G$ together with the set $F$ of face boundaries has a nested plane
embedding.  We prove by induction on the number of elements of $F$ that do not bound a facet of the nested plane embedding that~$C$ has an outerspherical embedding. If the number is zero, we get that~$C$ is an embedding of $G$ on the sphere, which covers the base case.
Now, let $f\in F$ that does not bound a facet be given. By \autoref{jordan}, $f$ divides the plane embedding of $G$ into two subgraphs $G_1$ and $G_2$ intersecting at the cycle $f$. Let $F_i$ be the set of faces in $F$ attaching at $G_i$ for $i=1,2$. As $f$ does not bound a facet, each $G_i$ is strictly smaller than $G$. Hence by induction, each 2-complex $C_i$ obtained from $G_i$ by adding the faces from $F_i$ has an outerspherical embedding, which includes the face $f$. Now glue these two embeddings at the face $f$ to obtain an outerspherical embedding of $G$.

We proved that implications $(1)\implies(2)\implies(3)\implies(1)$, therefore all these three statements are equivalent as claimed.

\end{proof}

The equivalence of a $2$-complex being outerspatial and having a nested plane embedding of the $1$-skeleton is an interesting connection between graphs and $2$-complexes which will also be important for the proof of the main result.

The equivalence of a $2$-complex being outerspatial and being outerspherical is a good geometric characterisation of outerspatial $2$-complexes, that does not require further assumptions, which is also going to be useful to this paper. However, we want to find a more concrete characterisation, particularly with forbidden minors, which we will do in \autoref{section_core_lemma}.

Next, we prepare to give the details for \autoref{rem99} from the introduction.
\begin{lem}\label{is_nested} (Folklore)
Let $G$ be a planar graph without parallel edges and $\Ccal$ be any set of triangles of $G$. Then 
any plane embedding of $G$ is nested. 
\end{lem}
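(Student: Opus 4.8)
The plan is to show that two triangles in a simple plane graph can never intersect internally, so by \autoref{internal_nested} any set of triangles is automatically nested. Since nestedness is a pairwise condition, it suffices to fix two triangles $T_1$ and $T_2$ of $G$ and argue that their interiors are either disjoint or one contains the other. Equivalently, by \autoref{jordan} and the characterisation of crossing used in the proof of \autoref{sublem2}, I would show that $T_1$ and $T_2$ cannot cross: there is no vertex $v$ shared by both triangles at which the four incident edges alternate $e_1,e_2,e_3,e_4$ around the rotation with $e_1,e_3 \in T_1$ and $e_2,e_4 \in T_2$.

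First I would reduce to understanding how two triangles can meet. Two triangles in a graph without parallel edges share a set of common edges and common vertices; since a triangle has only three edges, the possible overlaps are limited. The key combinatorial observation is that if the interiors of two distinct triangles intersect but neither contains the other, then their boundaries must cross, and a crossing in a plane embedding forces a shared vertex $v$ where the cyclic order of the (at most three) edges of each triangle interleaves. I would enumerate the cases according to $|T_1 \cap T_2|$ measured in shared vertices: if they share at most one vertex, the interiors meet only if one triangle is drawn inside the other (no boundary crossing is possible without a second shared vertex or a parallel edge); if they share two vertices, then because the graph is simple the edge between those two vertices is unique, so the triangles share that edge and their third vertices lie on the two sides of it, again giving nested or disjoint interiors rather than a crossing; if they share all three vertices, simplicity forces $T_1 = T_2$.

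The heart of the argument is to rule out the interleaving (crossing) configuration using simplicity. Suppose for contradiction that $T_1$ and $T_2$ cross at a vertex $v$ in the sense above, so four distinct edges $e_1,e_2,e_3,e_4$ at $v$ alternate between the two triangles. Each triangle contributes exactly two edges at $v$, so $v$ has degree at least four into $T_1 \cup T_2$ and the two edges of $T_1$ at $v$ lead to the two other vertices of $T_1$, and similarly for $T_2$. Because each triangle has only one further vertex besides $v$ and its two neighbours along the triangle, I would track where the third vertices sit: the alternation of edges at $v$ would force the closing edge of one triangle to separate the two neighbours of the other triangle, which in a plane embedding means one triangle's boundary must pass through the interior of the other's bounded region, and then simplicity (no parallel edge can provide an alternative route) makes such a separating configuration impossible without the triangles sharing an edge, collapsing back to the non-crossing cases above.

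I expect the main obstacle to be the careful case analysis of how two triangles can share vertices and edges, and in particular making rigorous the intuitive claim that a genuine interior crossing of two cycles requires either a crossing of their drawn edges (impossible in a plane embedding) or a shared vertex with interleaved rotation (ruled out above for triangles in a simple graph). Handling the subtle case where the two triangles share exactly two vertices but the drawings place their interiors on overlapping regions will require invoking the Jordan curve separation of \autoref{jordan} to pin down which side each third vertex lies on; once that is settled, the conclusion that the interiors are nested or disjoint follows, and by \autoref{internal_nested} the embedding is nested.
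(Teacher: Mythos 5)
Your proposal is correct and follows essentially the same route as the paper: the paper's proof is a one-liner asserting that two triangles cannot intersect internally and then invoking \autoref{internal_nested}, which is exactly your overall plan. Your case analysis (shared vertex with interleaved rotation ruled out by the Jordan curve argument, two shared vertices forcing a shared edge by simplicity) merely fills in the details the paper treats as immediate, and it does so soundly.
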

\begin{proof}
Since any two cycles which are triangles cannot intersect internally, the result immediately follows from \autoref{nested_emb}.
\end{proof}

\begin{prop} \label{main_naive_lemma}
A $2$-dimensional simplicial complex is outerspatial if and only if its $1$-skeleton is planar.
\end{prop}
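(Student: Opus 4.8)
The plan is to prove both directions using the machinery already assembled, exploiting the fact that a $2$-dimensional simplicial complex has all faces of size three.

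For the forward direction, suppose the simplicial complex $C$ is outerspatial. By \autoref{plane_to_outerspatial} (the implication $(2)\implies(3)$), the $1$-skeleton $G$ of $C$ together with its set of face boundaries has a nested plane embedding; in particular $G$ is planar. This direction is immediate and requires no use of the triangle condition.

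For the reverse direction, suppose the $1$-skeleton $G$ is planar. Since every face of $C$ is a triangle, I would invoke \autoref{is_nested}: any plane embedding of $G$ is automatically nested for the set $\Ccal$ of face boundaries, because two triangles can never intersect internally. (Here I should note that simplicial complexes are simple, so $G$ has no parallel edges and \autoref{is_nested} applies.) Thus $G$ together with the set of face boundaries of $C$ has a nested plane embedding, which is precisely condition $(3)$ of \autoref{plane_to_outerspatial}. Applying that lemma's equivalence $(3)\implies(2)$ gives that $C$ is outerspatial, completing the proof.

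The only subtlety I anticipate — and it is minor — is verifying the hypotheses that let the cited lemmas fire: namely that the complex being simplicial guarantees it is simple (no parallel edges, so \autoref{is_nested} applies) and that its faces are genuine triangles (so the ``cannot intersect internally'' observation is valid). Beyond this bookkeeping, there is no real obstacle: the proposition is essentially a corollary of \autoref{plane_to_outerspatial} and \autoref{is_nested}, and the content of \autoref{rem99} is exactly that for simplicial complexes the three-dimensional outerspatiality question collapses to the one-dimensional planarity question, precisely because the nestedness constraint becomes vacuous for triangles.
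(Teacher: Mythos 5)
Your proof is correct and follows essentially the same route as the paper: the reverse direction is identical (simplicity rules out parallel edges, so \autoref{is_nested} gives a nested plane embedding of the $1$-skeleton with the triangular face boundaries, and $(3)\implies(2)$ of \autoref{plane_to_outerspatial} yields outerspatiality). The only minor difference is in the forward direction, where the paper argues directly that the link graph at the top of the cone equals the $1$-skeleton and applies \autoref{link_planar}, while you invoke the stronger implication $(2)\implies(3)$ of \autoref{plane_to_outerspatial}; both are valid, yours simply routes through slightly heavier machinery than necessary.
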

\begin{proof}
For the `only if' implication consider an outerspatial $2$-dimensional simplicial complex~$C$. Since~$C$ is outerspatial, its cone $\widehat{C}$ is embeddable. The link graph $L(t)$ at the top $t$ of the cone is equal to the $1$-skeleton of~$C$. By \autoref{link_planar} and the fact that $\widehat{C}$ is embeddable, the graph $L(t)$ is planar. Since $L(t)$ is planar and is equal to the $1$-skeleton of~$C$, it follows that the $1$-skeleton of~$C$ is planar.

For the `if' implication, suppose that the $1$-skeleton of~$C$ is planar. Since~$C$ is a simplicial complex, its $1$-skeleton has no parallel edges, and all face boundaries are triangles. Thus, by \autoref{is_nested} the 1-skeleton has a nested plane embedding. So, the $2$-dimensional simplicial complex is outerspatial by  \autoref{plane_to_outerspatial}.
\end{proof}

\begin{rem}
For $2$-dimensional simplicial complexes, the problem of outerspatiality is simple, however, this changes when one allows larger faces. That is because \autoref{main_naive_lemma} follows easily from \autoref{is_nested} and \autoref{plane_to_outerspatial} but for the $2$-complex analogue of \autoref{main_naive_lemma} we do not have an analogue of \autoref{is_nested} that we can use so we cannot apply \autoref{plane_to_outerspatial} as easily. Checking for outerspatiality becomes significantly more complex when moving from the class of 2-dimensional simplicial complexes to general 2-complexes.  This is because the associated nested plane embedding problem becomes nontrivial.  This motivates why we work with $2$-complexes in the rest of the paper.
\end{rem}

\section{Core Lemma} \label{section_core_lemma}
In this section we are going to introduce the Core Lemma, which is the main ingredient in proving \autoref{mainintro}. We will prove it in the next section.

\begin{defn}
In this paper we will call a compact connected $2$-dimensional topological manifold without boundary a \emph{surface}.
\end{defn}
\begin{defn}
A $2$-complex is \emph{aspherical} if its geometric realisation is a surface which is not homeomorphic to the $2$-sphere. 
\end{defn}

\begin{lem}\label{a-sphere-cone}
Aspherical $2$-complexes are not outerspatial.
\end{lem}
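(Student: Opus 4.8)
The plan is to assume for contradiction that $C$ is aspherical and outerspatial, and to derive that its underlying surface $\Sigma$ must be a $2$-sphere. By \autoref{plane_to_outerspatial}, outerspatiality gives an \emph{outerspherical} realisation: the $1$-skeleton $G$ of $C$ is embedded in a $2$-sphere $S \subseteq \mathbb{R}^3$, and every face is a disc glued inside the closed ball $\bar D$ bounded by $S$, with pairwise disjoint interiors meeting $S$ exactly in its boundary cycle. Thus each face is a properly embedded disc in $\bar D$ whose boundary is a cycle of the sphere graph $G$, and distinct faces meet only along $G \subseteq S$. Since $C$ is a closed surface every edge lies in exactly two faces, and since an embedding of the cone $\widehat C$ restricts to an embedding of $C$, and hence of $\Sigma$, the surface $\Sigma$ is orientable; in particular $\Sigma$ is not one of the non-orientable surfaces, which fail to embed in $\mathbb{R}^3$ at all.

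Next I would analyse the complement. By the Jordan–Brouwer separation theorem (\autoref{jordan}, in its form for closed orientable surfaces), $\mathbb{R}^3 \setminus \Sigma$ has exactly two components, an unbounded one $A$ and a bounded one $B$, with common frontier $\Sigma$. The first key step is to show $B$ is a single component of $D \setminus \Sigma$, where $D$ is the open ball. Crossing $S$ at a point off $G$ does not cross $\Sigma$, so it does not change the component of the complement; since the exterior of $S$ lies in $A$, every point of $D$ lying just inside a region of $S \setminus G$ also lies in $A$. Hence $B$ meets neither $S$ nor the exterior of $\bar D$, so $B \subseteq D$ and $B \subseteq D \setminus \Sigma$. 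As $B$ is connected it lies in a single component of $D \setminus \Sigma$, and that component in turn lies in the complementary region $B$; hence $B$ is exactly one such component -- a single ``chamber'' of the ball cut along the face discs.

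Finally I would use the disc structure to identify this chamber. The faces are pairwise disjoint, properly embedded, separating discs in $\bar D$; cutting a $3$-ball successively along such discs yields pieces each of which is again a $3$-ball (each disc, together with one of the two discs it cuts off on $S$, bounds a ball by the $3$-dimensional Schoenflies/Alexander theorem). Thus $\bar B$ is a $3$-ball. Its boundary $2$-sphere is precisely $\Sigma$: no region of $S \setminus G$ lies on the frontier of $B$, so $\partial \bar B$ consists only of faces, and since $B$ is a complementary region of the connected surface $\Sigma$ its frontier is all of $\Sigma$, giving $\partial \bar B = \Sigma$. Therefore $\Sigma \cong S^2$, contradicting the assumption that $\Sigma$ is not homeomorphic to the $2$-sphere.

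I expect the main obstacle to be the last step's appeal to tameness: a priori a topological embedding of $C$ could be wild (as with the Alexander horned sphere), and then the chamber $\bar B$ need not be a ball. The plan is to circumvent this by working with the piecewise-linear structure on the finite complex $C$ -- arranging the outerspherical realisation to be piecewise linear -- so that the Schoenflies-type ball-cutting and the identification $\partial \bar B = \Sigma$ become standard PL facts. A secondary point to verify carefully is that crossing a face always switches the complementary region of $\Sigma$ (using orientability and two-sidedness), which underpins the claim that distinct chambers of $B$ cannot be adjacent and hence that $B$ is a single chamber.
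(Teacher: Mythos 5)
Your proposal is correct in outline, but it takes a genuinely different route from the paper. The paper's proof is a short combinatorial argument: an embedding of $\widehat{C}$ induces two embeddings of the $1$-skeleton $G$ with surfaces attached to them --- the restriction of the embedding to $C$ itself, which is an embedding of $G$ in the aspherical surface $\Sigma$, and the embedding of the link graph at the top of the cone, which is an embedding of $G$ in the $2$-sphere --- and the rotators at the cone edges show that these two embeddings carry the same rotation system; since a rotation system determines the surface (the rotation principle), $\Sigma$ would have to be the sphere, a contradiction. You instead pass through \autoref{plane_to_outerspatial} (legitimately: its proof does not rely on \autoref{a-sphere-cone}) to an outerspherical realisation and run a geometric cut-and-paste argument, using a surface version of \autoref{jordan} plus PL Alexander/Schoenflies to conclude that the closure of the bounded complementary region of $\Sigma$ is a $3$-ball with boundary $\Sigma$, forcing $\Sigma \cong S^2$. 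Two points need tidying. First, the closed face discs are \emph{not} pairwise disjoint as you wrote: since $C$ realises a closed surface, every edge of $G$ lies in exactly two faces, so the discs share boundary arcs along $G$; this does not break your inductive cutting, because each remaining disc is still properly embedded in the ball piece containing it (its interior is disjoint from the earlier discs and from $S$, and its boundary lies in the piece's boundary sphere), but the claim should be stated as internal disjointness. Second, your tameness concern is real and your fix is the right one: the construction of an outerspherical embedding from a nested plane embedding in the proof of \autoref{plane_to_outerspatial} can be carried out piecewise-linearly, which matches the conventions of the series \cite{JC1}, where topological and PL embeddability of $2$-complexes in $3$-space coincide. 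As for what each approach buys: the paper's rotation-system proof is a few lines, purely combinatorial, and needs no separation or Schoenflies input at all; yours is heavier machinery but more self-contained geometrically, makes the role of the outerspherical picture explicit, and yields as a by-product that the bounded chamber cut out by the faces is a $3$-ball.
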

\begin{proof}
Consider an aspherical $2$-complex~$C$ with a $1$-skeleton $G$. Then the link graph at the top of the cone is equal to $G$. Since $G$ is a triangulation of a surface of positive genus, it cannot be planar because it does not have the required Euler characteristic. But the link graphs of an embeddable $2$-complexes are planar by \autoref{link_planar}. Hence, $\widehat{C}$ does not embed in $\mathbb R^3$ which means that~$C$ is not outerspatial.
\end{proof}
\begin{lem}\label{contract_edge}
Contraction of non-loop edges preserves being outerspatial. 
\end{lem}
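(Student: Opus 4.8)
The plan is to reduce the statement to the fact, established in \cite{JC1}, that the space-minor operations preserve embeddability in $\mathbb{R}^3$; specifically I will use that both contracting a non-loop edge and contracting a face of size two whose two edges are not loops keep a $2$-complex embeddable. Let $e=uv$ be a non-loop edge of an outerspatial $2$-complex $C$, let $\bar u$ be the vertex of $C/e$ obtained by identifying $u$ and $v$, and let $t$ denote the top of the cones involved. Since $C$ is outerspatial, its cone $\widehat C$ is embeddable, and what I want to prove is exactly that $\widehat{C/e}$ is embeddable, i.e.\ that $C/e$ is outerspatial. So the whole lemma becomes a statement about how coning interacts with edge contraction.

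The key step is the combinatorial identity $\widehat{C/e} = \bigl(\widehat C / e\bigr)/ b$, where $b$ is the face of size two produced by contracting $e$ inside $\widehat C$. To verify it I would track the cells of $\widehat C$ under the two contractions. Recall that $\widehat C$ consists of $C$ itself, one \emph{cone edge} $wt$ for each vertex $w$ of $C$, and one \emph{cone triangle} over each edge of $C$. Contracting $e$ in $\widehat C$ merges $u,v$ into $\bar u$, deletes $e$, turns the two cone edges $ut,vt$ into a pair of parallel $\bar u t$-edges, and turns the cone triangle over $e$ into a size-two face $b$ bounded precisely by those two parallel edges; every other cell is merely relabelled. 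As $e$ is a non-loop edge we have $\bar u\neq t$, so the two edges of $b$ are not loops and $b$ may legitimately be contracted, which identifies the two parallel $\bar u t$-edges into a single edge $r$. I would then check that the outcome is exactly $\widehat{C/e}$: its vertex set is $V(C/e)\cup\{t\}$; its unique cone edge at $\bar u$ is $r=\bar u t$ while the cone edges at all other vertices are untouched; the cone triangle over each edge $f\neq e$ of $C$ has become the cone triangle over the corresponding edge of $C/e$; and each original face of $C$ has had $e$ deleted from its boundary, which is precisely how the faces of $C/e$ are formed. Given the identity, embeddability of $\widehat C$ together with preservation of embeddability under contracting a non-loop edge and contracting a size-two face \cite{JC1} immediately gives that $\widehat{C/e}$ is embeddable, hence $C/e$ is outerspatial.

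The main obstacle is this bookkeeping rather than any deep topology: one must confirm that the bigon $b$ is genuinely contractible (which is exactly why the hypothesis $\bar u\neq t$, i.e.\ $e$ non-loop, is used) and that the identification of the two parallel cone edges into $r$ is consistent across all cone triangles incident with $\bar u$, including the case where $C$ already had edges parallel to $e$. In that case $C/e$ acquires a loop and its cone is undefined, so I would restrict, as is implicit, to contractions yielding a loopless complex and treat the degenerate case as falling outside the definition of outerspatiality. An alternative that avoids cones is to invoke \autoref{plane_to_outerspatial}: start from a nested plane embedding of the $1$-skeleton and its face boundaries, contract the arc of $e$ by sliding $v$ towards $u$, and argue nestedness is preserved; but controlling the nesting relation through the limiting collapse is more delicate than the combinatorial reduction above, so I would prefer the cone argument.
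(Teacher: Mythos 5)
Your proof is correct, but it takes a genuinely different route from the paper's. The paper disposes of this lemma in one line via the outerspherical characterisation: by \autoref{plane_to_outerspatial}, being outerspatial is equivalent to being outerspherical, and an outerspherical embedding (graph in a $2$-sphere with faces glued as internally disjoint discs inside) visibly survives contraction of a non-loop edge along the sphere. You instead stay with cones and prove the combinatorial identity $\widehat{C/e}=\bigl(\widehat{C}/e\bigr)/b$, where $b$ is the bigon arising from the cone triangle over $e$, and then quote \cite{JC1} for the fact that contracting a non-loop edge and contracting a size-two face whose edges are not loops both preserve embeddability in $3$-space. Your route buys self-containedness: it does not lean on the equivalence \autoref{plane_to_outerspatial}, whose proof is substantial, and it makes explicit how coning commutes with contraction --- exactly the content the paper hides inside the word \lq clearly\rq. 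The costs are that you invoke the third space-minor operation (bigon contraction), which the paper otherwise deliberately restricts away from (though the cited fact from \cite{JC1} covers all five operations, so this is legitimate), and that you must handle the degenerate case where $C/e$ acquires a loop and its cone is undefined, which you correctly flag; the paper's outerspherical route quietly sidesteps this because \autoref{outerspherical} makes sense for graphs with loops. One small inaccuracy in your bookkeeping: $\bar u\neq t$ holds simply because the top $t$ is a new vertex not belonging to $C$, independently of $e$; the non-loop hypothesis is used earlier, to make the contraction of $e$ in $\widehat C$ a legitimate operation (and $C/e$ well-defined) at all. This slip does not affect the validity of the argument.
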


\begin{proof}
Clearly being outerspherical is preserved by contracting non-loop edges. So, this follows from \autoref{plane_to_outerspatial}.
\end{proof}

\begin{lem}\label{link_outerplanar}
All link graphs of an outerspatial $2$-complex are outerplanar. 
\end{lem}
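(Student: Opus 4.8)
The plan is to exploit the connection between the link graph of an outerspatial $2$-complex and the cone construction, together with the characterisation of being outerspatial via outerspherical embeddings from \autoref{plane_to_outerspatial}. Let $C$ be an outerspatial $2$-complex and let $v$ be any vertex of $C$; I want to show that the link graph $L(v)$ is outerplanar. Recall (by \autoref{candidate_outerspatial}) that $C$ has an outerspherical embedding: its $1$-skeleton lives in a $2$-sphere $S$ and its faces are glued as discs on the inside of $S$ without internal intersections. The key idea is to produce, from this embedding, a plane embedding of $L(v)$ in which a particular face is incident with all vertices -- which is exactly the condition for $L(v)$ to be outerplanar.

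First I would fix a vertex $v$ and look at a small sphere $S_v$ around the image of $v$ in the outerspherical embedding. The trace of $C$ on $S_v$ is precisely (a drawing of) the link graph $L(v)$: each edge of $C$ at $v$ gives a vertex of $L(v)$ (where the edge punctures $S_v$), and each face of $C$ at $v$ gives an edge of $L(v)$ (where the disc meets $S_v$ in a small arc). The crucial geometric observation is that, in an outerspherical embedding, all the faces incident with $v$ emanate into the \emph{inside} of the big sphere $S$. So on the small sphere $S_v$, all the arcs coming from faces lie in the hemisphere of $S_v$ facing the inside of $S$, while the part of $S_v$ facing the outside of $S$ is untouched by any face. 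This untouched region is an open disc on $S_v$ whose boundary region is incident with every puncture point (every vertex of $L(v)$), since each edge of $C$ at $v$ is a curve on $S$ that has sides facing both the inside and the outside.

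The next step is to make this precise as a planar drawing: puncturing the small sphere $S_v$ at a point in the outer (face-free) region and flattening gives a plane embedding of $L(v)$ in which the outer face contains all vertices of $L(v)$ -- that is, $L(v)$ is outerplanar. I would set this up by invoking the standard fact that link graphs of an embeddable complex are planar (\autoref{link_planar}) to get planarity for free, and then argue that the outerspherical structure forces all face-arcs to one side, yielding a face of the planar drawing incident with every vertex. An equivalent and perhaps cleaner route is to phrase everything through the cone: $C$ is outerspatial means $\widehat C$ embeds, and in such an embedding the link graph at $v$ already sits in a sphere; the extra cone edge from $v$ to the top $t$ punctures $S_v$ at one further point, and the faces of $\widehat C$ through $v$ and $t$ attach $L(v)$'s vertices to this puncture point. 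That single common point then witnesses outerplanarity of $L(v)$.

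The main obstacle I anticipate is turning the intuitive ``all faces go to one side'' picture into a rigorous statement about the rotation/incidence structure of $L(v)$, i.e. verifying that the region of $S_v$ not met by any face is genuinely a single disc bordered by all vertices of $L(v)$, rather than several disjoint regions that collectively miss some vertex. This requires using the definition of outerspherical (\autoref{outerspherical}) carefully: the discs meet $S$ exactly in their gluing cycles and do not cross each other internally, so near $v$ they occupy a connected ``inward'' portion of $S_v$. I expect the cleanest way to close this gap is to argue on $\widehat C$ and locate the common incident point as the trace of the cone edge $vt$, since its existence as a single point incident to every vertex of $L(v)$ is precisely the definition of $L(v)$ being outerplanar (a plane embedding with a face containing all vertices), and then I would only need to check that this point indeed lies in a common face of the induced planar drawing of $L(v)$.
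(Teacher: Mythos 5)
Your proposal is sound, and the fallback route you recommend at the end (arguing on $\widehat{C}$ and using the trace of the cone edge $vt$) is in substance the paper's own proof; but your primary route is genuinely different, so a comparison is worthwhile. The paper argues purely combinatorially: a short sublemma (\autoref{cone_link}) shows that the link graph of $v$ in the cone $\widehat{C}$ is exactly the $1$-dimensional cone over $L(v)$; since $\widehat{C}$ embeds, that link graph is planar by \autoref{link_planar}; and since the paper takes ``the cone over $G$ is planar'' as its working definition of outerplanarity, the proof ends there --- no small spheres, traces, or drawings are needed. Your main route instead goes through the outerspatial--outerspherical equivalence of \autoref{plane_to_outerspatial} and reads a drawing of $L(v)$ off a small sphere $S_v$ around $v$: because the edges of $C$ lie on the big sphere $S$ while the faces are glued strictly inside it, every vertex of $L(v)$ sits on the circle $S\cap S_v$ and the cap of $S_v$ facing the outside of $S$ is free of face-arcs, so the face of the drawing containing that cap is incident with all vertices. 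This is correct and not circular (the proof of \autoref{plane_to_outerspatial} does not use the present lemma), but it is heavier machinery: it imports the nontrivial $(2)\Rightarrow(3)\Rightarrow(1)$ implications of \autoref{plane_to_outerspatial}, plus the topological care you yourself flag about the face-free region being one disc whose boundary meets every vertex, whereas the paper's argument costs three lines. One small correction to your closing step: a single point \emph{incident} to every vertex is not by itself the definition of outerplanarity; what you actually get is an apex joined by edges (the traces of the triangular faces $tuv$) to all vertices of $L(v)$, i.e.\ a plane drawing of the cone over $L(v)$, and then either you delete the apex to leave all vertices on one face, or --- as the paper does --- you take cone-planarity itself as the definition, which dissolves your remaining ``check that this point lies in a common face'' step entirely.
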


\begin{proof}
Consider an outerspatial $2$-complex~$C$ and a link graph~$L(v)$ at an arbitrary vertex $v$ of~$C$. Let $\widehat{C}$ be the cone over~$C$ with a top $t$ and let $\widehat{L(v)}$ be the link graph at $v$ as a vertex of $\widehat{C}$.
\begin{sublem} \label{cone_link}
$\widehat{L(v)}$ is the $1$-dimensional cone over~$L(v)$.
\end{sublem}
\begin{proof}
Recall that the link graph~$L(v)$ has a vertex for each edge of~$C$ and two vertices in~$L(v)$ are connected by an edge if they share a face in~$C$. To build the cone $\widehat{C}$ from the $2$-complex~$C$, we add one new edge incident to $v$, namely $tv$, so~$L(v)$ has one new vertex. For each edge $uv$ incident to $v$ in~$C$ we add one face $tuv$, incident to the edge $tv$. Therefore, for any vertex $uv$ of~$L(v)$ we add one edge between $tu$ and $uv$. Thus, we showed that to obtain $\widehat{L(v)}$ from~$L(v)$ we add one new vertex and connect it to all old vertices by an edge. So $\widehat{L(v)}$ is the cone over~$L(v)$ as claimed.
\end{proof}
Since~$C$ is outerspatial, $\widehat{C}$ is embeddable in $3$-space by definition. Therefore, by \autoref{link_planar}, we know that $\widehat{L(v)}$ is planar. By \autoref{cone_link}, we know that $\widehat{L(v)}$ is the cone over~$L(v)$. In other words, the cone over~$L(v)$ is planar. This is precisely the definition of an outerplanar graph. Since $v$ was arbitrary, we showed that all link graphs of~$C$ are outerplanar, as desired.
\end{proof}

\begin{lem} \label{core_lemma}
(Core Lemma) A simple locally $2$-connected $2$-complex~$C$ is outerspatial if and only if
it does not contain an aspherical 2-complex as a subcomplex, and it does not contain a path $P$ 
such that the link graph at the vertex $P$ of $C/P$ is not outerplanar. 
\end{lem}

\section{Main techniques} \label{section_techniques}
In this section we are going to prove two lemmas that are needed for one of the implications of \autoref{core_lemma}. At the end of the section we will show the proof of \autoref{core_lemma} given these two lemmas and consequently show the proof of \autoref{mainintro} given \autoref{core_lemma} . We will start the section with a definition that will be used in the context of both lemmas.
\begin{defn} 
We are going to call a $2$-connected simple outerplanar graph a \emph{bi-outerplanar} graph.
\end{defn}
% \begin{defn}
% We are going to call a bi-outerplanar graph together with a drawing in the plane a \emph{bi-outerplane} graph.
% \end{defn}
The class of bi-outerplanar graphs will be important for the proofs of these lemmas. Thus, before we get to proving them, we are going to need to explore bi-outerplanar graphs through definitions and a few small lemmas.

\begin{thm}(\cite{Outerminor})  \label{link_minors}
The set of excluded minors for the class of outerplanar graphs consists of the graphs $K_4$ and $K_{2,3}$.
\end{thm} 
\begin{obs} \label{equiv}
The link graphs of an outerspatial $2$-complex do not have $K_4$ or $K_{2,3}$ minors.
\end{obs}
\begin{proof}
The link graphs of an outerspatial $2$-complex are all outerplanar by \autoref{link_outerplanar}. The class of outerplanar graphs is characterised by its forbidden minors $K_4$ and $K_{2,3}$. From this, the conclusion follows.
\end{proof}
\begin{lem} \label{polygonal}(Folklore)
Every bi-outerplanar graph $G$ has a unique Hamiltonian cycle which bounds its outer face.
\end{lem}

Using the notions from the previous lemma we have the following definitions.
\begin{defn}
In a bi-outerplanar graph, we pick a cycle as in \autoref{polygonal} and refer to it as the \emph{boundary cycle}. We call an edge \emph{diagonal} if it connects two non-consecutive edges of the boundary cycle.
\end{defn}

\begin{defn}
A face of a $2$-complex~$C$ is called \emph{diagonal} if it is a diagonal edge in some of the link graphs of 
$C$. 
\end{defn}
\begin{defn}
A diagonal face of a $2$-complex is called \emph{perfectly diagonal} if it is diagonal in the link graphs of all 
of its endvertices.
\end{defn}

\begin{lem} \label{diagonal_are_perfect}
Consider a simple $2$-complex~$C$. If the link graphs of the complexes $C/P$ for paths $P$ of~$C$ (possibly trivial) are all $2$-connected outerplanar graphs, then all of its diagonal faces are perfectly diagonal.
\end{lem}

\begin{lem} \label{2-outerplanar_outerspatial}
Suppose that the simple $2$-complex~$C$ does not have an aspherical subcomplex. If the diagonal faces of~$C$ are all perfectly diagonal and the link graphs of the complexes $C/P$ for paths $P$ of~$C$ (possibly trivial) are all bi-outerplanar, then~$C$ is outerspatial.
\end{lem}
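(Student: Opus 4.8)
The plan is to prove outerspatiality through the outerspherical characterisation of \autoref{plane_to_outerspatial}: I will embed the $1$-skeleton $G$ of $C$ on a round $2$-sphere $S$ in $\mathbb{R}^3$ and then glue every face of $C$ inside $S$ as a disc so that no two of these discs meet in interior points. By \autoref{internal_embeddable} this yields an embedding of $C$, which is outerspherical by construction, so $C$ is outerspatial by \autoref{plane_to_outerspatial}. I may assume $C$ is connected, treating components separately and embedding their spheres disjointly.

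The starting observation is that the hypotheses split the faces into two types. Fixing a boundary cycle in each $2$-outerplane link graph $L(v)$ (which exists by \autoref{polygonal}), every face of $C$ is either a chord in each of its link graphs, or a boundary-cycle edge in each of them; the mixed case is excluded precisely because all chordal faces are perfectly chordal. I call the latter faces \emph{non-chordal} and let $C_0$ be the subcomplex they span. Counting the two boundary neighbours of a vertex $e$ of $L(v)$ on its Hamiltonian boundary cycle shows that every edge of $C$ lies on exactly two non-chordal faces, and that the link graph of $C_0$ at each vertex $v$ is exactly the boundary cycle of $L(v)$, a single cycle. Hence the geometric realisation of $C_0$ is a closed surface. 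As $C$ has no aspherical subcomplex and $C$ (hence $C_0$) is connected, $C_0$ must be the $2$-sphere. I therefore realise $C_0$ as the round sphere $S$, which simultaneously gives a plane (spherical) embedding of $G$ whose faces are exactly the non-chordal faces of $C$.

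It remains to glue the perfectly chordal faces inside $S$ without internal intersections; by \autoref{internal_nested} this amounts to checking that the full set of face boundaries is nested in this embedding of $G$. A chordal boundary cannot cross a non-chordal one, since the non-chordal faces are the (empty) faces of the spherical embedding and no cycle of $G$ enters the interior of such a face, so a chordal interior either contains it or is disjoint from it. For two chordal faces $f_1,f_2$ meeting at a single vertex $v$, the rotation at $v$ induced by $S$ is the boundary cycle of $L(v)$, and $f_1,f_2$ are two of its chords; two chords of the boundary cycle of an outerplanar graph never interleave (otherwise they would force a crossing in an outerplane drawing), so $f_1$ and $f_2$ do not cross at $v$. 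Since two nested or disjoint circles on a sphere bound disjoint discs inside the ball, once every pair of boundaries is nested I can glue all chordal discs disjointly, completing the embedding.

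The main obstacle is the last point in full generality: two chordal boundary cycles may fail to meet in a single vertex and instead run together along a shared path $P$, where the interleaving test above does not directly apply. This is exactly where I expect to use the full strength of the hypothesis, namely that the link graphs of the contractions $C/P$ are again $2$-outerplanar (so that, via \autoref{chordal_are_perfect}, chordality is inherited by the contraction). Contracting the shared path $P$ to a single vertex $p$ turns $f_1$ and $f_2$ into chords of the boundary cycle of the $2$-outerplanar link graph at $p$, and the same no-interleaving argument shows they do not cross there; since contracting $P$ does not change how the two cycles are nested on $S$, this rules out crossings along shared paths as well. Assembling these cases shows that the family of all face boundaries is nested, and the construction above then produces the required outerspherical embedding, so $C$ is outerspatial.
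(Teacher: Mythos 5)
Your proposal is correct and takes essentially the same route as the paper: your subcomplex $C_0$ spanned by the non-chordal faces is exactly the complex $D$ that the paper obtains by deleting chordal faces one by one (a sphere, by the no-aspherical-subcomplex hypothesis), and your treatment of two crossing chordal boundaries --- contracting a shared path and observing that two interleaved chords of the boundary cycle would violate $2$-outerplanarity of the link graph of $C/P$ --- is precisely the paper's $K_4$-minor argument at the contracted vertex $p'$ in \autoref{boundaries_are_nested}. Both arguments then conclude identically via the nested plane embedding equivalence of \autoref{plane_to_outerspatial}.
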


The next two subsections are devoted to proving these two lemmas.

\subsection{Proof of \autoref{diagonal_are_perfect}}

\begin{defn}
Let $H_1$ and $H_2$ be two graphs with a common vertex $v$ and a bijection $\iota$ between the edges incident with $v$ in $H_1$ and $H_2$. The \emph{vertex sum} of $H_1$ and $H_2$ over $v$ given $\iota$ is the graph obtained from the disjoint union of $H_1$ and $H_2$ by deleting $v$ in both $H_i$ and adding an edge between any pair $(v_1; v_2)$ of vertices $v_1 \in V (H_1)$ and $v_2 \in V (H_2)$ such that $v_1v$ and $v_2v$ are mapped to one another by $\iota$.

\end{defn} 

\begin{proof}[Proof of \autoref{diagonal_are_perfect}]
    If there are no diagonal faces, the claim is vacuously true. Suppose that there is a diagonal face $f$ bounded by the cycle with vertices $u,x,x_1,\dots,x_n$ and edges $ux,xx_1,x_1x_2,\dots,x_nu$, which is a chord in the link graph $L(u)$ at the vertex $u$. The edge $e_x=ux$ is a vertex in the link graphs $L(u)$ and $L(x)$ and is of the same degree in both. The degree $d$ of $e_x$ in $L(u)$ is at least three as it is the endvertex of a chord so the degree of $e_x$ in $L(x)$ is also at least three, hence it is also the endvertex of a chord. We claim that the face $f$ is a chord in $L(x)$. We will prove this using the following.
\begin{sublem}
If the face $f$ is not a chord in $L(x)$, then the link graph $L(e_x)$ at the vertex $e_x$ of the $2$-complex $C/e_x$ has a $K_{2,3}$ minor.
\end{sublem}
\begin{proof}
Suppose for a contradiction that the face $f$ is not a chord in $L(x)$. By \cite[Observation 3.1.]{JC1} the link graph $L(e_x)$ at the vertex $e_x$ in the $2$-complex $C/e_x$ is equal to the vertex sum of the graphs $L(u)$ and $L(x)$ in~$C$ at their common vertex $e_x$. Denote $H$ to be this vertex sum. We shall see how the edges incident to $e_x$ in $L(u)$ and $L(x)$ get identified to obtain $H$. Recall that $e_x$ is the endvertex of at least one chord in $L(x)$. Since the chord $f$ in $L(u)$ is a non-chord in $L(x)$, by the pigeonhole principle one of the chords in $L(x)$ incident to $e_x$ is a non-chord in $L(u)$, call that chord $g$. Let the other end of $f$ in $L(u)$ be $e_x'$ and the other end of $g$ in $L(x)$ be $e_x''$. Since $L(x)$ is bi-outerplanar, there are two non-chord edges incident to $e_x$, one of which is $g$, let the other one be $k$. Now we have the following.

There is a path of length at least two from $e_x'$ to $e_x$ through $g$ in $L(u)$ and a path of length at least one from $e_x$ to $e_x''$ through $g$ in $L(u)$ and the same for~$k$. There is a path of length at least one from $e_x'$ to $e_x$ through $f$ in $L(u)$ and a path of length at least two from $e_x$ to $e_x''$ through $f$ in $L(x)$. All the paths mentioned above are pairwise internally vertex disjoint. Therefore, in the vertex sum $H$ there are three internally vertex disjoint paths between the same pair of endvertices containing $f$, $g$ and $k$ respectively each of length at least two. This yields a subdivision of $K_{2,3}$. So $L(e_x)=H$ has a subgraph that is a subdivision of $K_{2,3}$. This means that $L(e_x)$ has a $K_{2,3}$ minor as claimed. 
\end{proof}
\begin{figure}[ht] 
\centering
\includegraphics[width=\textwidth]{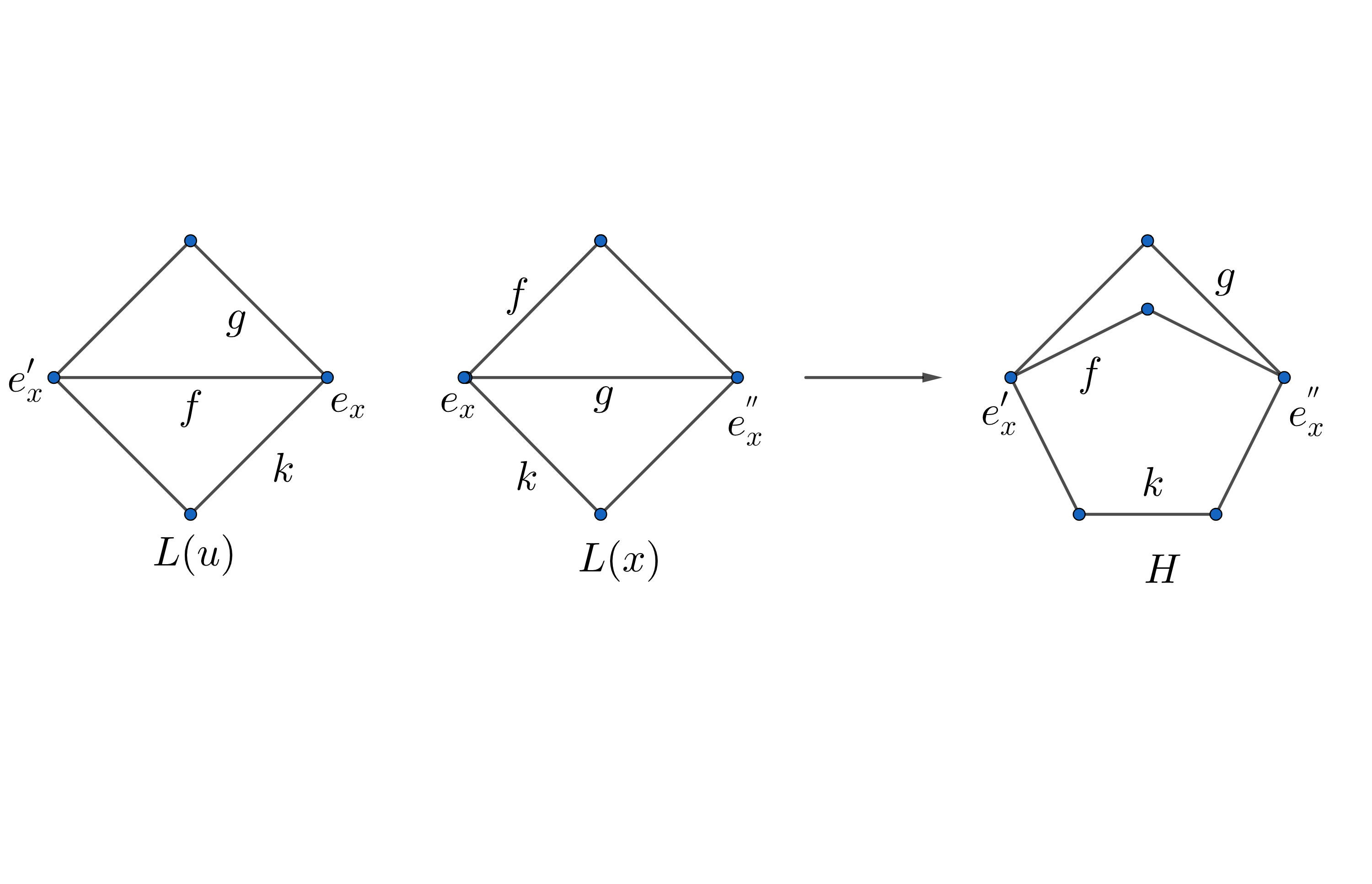}
\caption{$H$ is the vertex sum of $L(u)$ and $L(x)$ at the vertex $e_x$ and has a $K_{2,3}$ minor.}
\label{pic}
\end{figure} 
If the graph $L(e_x)$ has a $K_{2,3}$ minor, then $C/e_x$ has a link graph that is not outerplanar. Using \autoref{equiv}, this means that $C/e_x$ is not outerspatial. Now, from \autoref{contract_edge}, we conclude that~$C$ is also not outerspatial, which is a contradcition with our assumptions. This yields that $f$ is a chord in $L(x)$. Similarly, looking at the link graphs $L(x)$ and $L(x_1)$, we obtain that $f$ is a chord in $L(x_1)$. Repeating this argument inductively, we obtain that $f$ is a chord in each $L(u)$, $L(x)$ and $L(x_i)$, $1\leq i \leq n$. Therefore, $f$ is a chord in the link 
graph of all of its endvertices. Since $f$ was arbitrary, we proved that every diagonal face is 
perfectly diagonal as claimed. 
\end{proof}

\subsection{Proof of \autoref{2-outerplanar_outerspatial}}

\begin{proof}[Proof of \autoref{2-outerplanar_outerspatial}]
If~$C$ has no diagonal faces, then all link graphs are cycles, since they are all bi-outerplanar. This means that that the geometric realisation of~$C$ is homeomorphic to a surface. By the assumption of the lemma, in such a case the geometric realisation of~$C$ must be homeomorphic to a sphere. A sphere is outerspatial and homeomorphism preserves outerspatiality, so we are done in this case.

Now, suppose that~$C$ has a diagonal face and consider one such face $f$. The link graph at each endvertex is bi-outerplanar and $f$ is a chord in each of these link graphs. Therefore, if we remove $f$ from the $2$-complex~$C$, we only remove chords from link graphs of~$C$ and they stay bi-outerplanar. Thus, we can remove the diagonal faces of~$C$ one by one to arrive at a $2$-complex $D$ whose link graphs are all cycles. As seen above, the geometric realisation of $D$ must be homeomorphic to a sphere. Because of that and the fact that the $1$-skeleton of $D$ is naturally embedded in $D$, we can view the $1$-skeleton of $D$ as a plane graph.
\begin{sublem} \label{boundaries_are_nested}
The $1$-skeleton of $D$ together with the boundaries of the removed diagonal faces form a nested plane embedding.
\end{sublem}
\begin{proof}
Suppose for a contradiction, that there are two diagonal faces $f_1$ and $f_2$ with face boundaries $c_1$ and $c_2$ which are not nested. By \autoref{jordan}, $c_1$ divides $D$ into two connected components. Since $c_1$ and $c_2$ are not nested, there are edges of $c_2$ in both connected components of $D\backslash c_1$. Therefore, there exists a subpath of $c_2$ that starts in one of the connected components of $D\backslash c_1$ and ends in the other. Choose a minimal such path and call it $p$. Contract the subpath $p'$ of $p$ that consists of $p$ with the first and last edge removed. Let the complex obtained from this contraction be $D'=D/p'$. The path $p'$ is a subpath of $c_1$ by minimality of $p$, and is obviously a subpath of $c_2$. Let $c_1'=c_1/p'$ and $c_2'=c_2/p'$. 

In the $2$-complex $D'$ we find two consecutive edges of $c_2'$, each in a different component of $D'\backslash c_1'$. Call the two edges $a_2$ and $b_2$ and notice that $p'$ is the vertex that these edges share. Let the two edges of $c_1'$ incident with $p'$ be $a_1$ and $b_1$. By assumption, the link graph $L(p')$ is bi-outerplanar and thus it is a cycle~$C$ together with a set of edges between the vertices of~$C$ by \autoref{polygonal}. In $L(p')$, the vertices $a_1$ and $b_1$ are connected by the edge $f_1$ and the vertices $a_2$ and $b_2$ are connected by the edge $f_2$. Since $f_1$ and $f_2$ are perfectly diagonal, they are chords in $L(p')$. The above shows that $L(p')$ contains a cycle together with two non-parallel chords as a subgraph. The latter has a $K_4$ minor and so $L(p')$ also has a $K_4$ minor. This means that the $2$-complex $D/p'$ for the path $p'$ has a non-outerplanar link graph at the vertex $p'$. The link graph $L(p')$ in $D/p'$ is a subgraph of the link graph $L(p')$ in $C/p'$. Therefore, $C/p'$ also has a non-outerplanar link graph at the vertex $p'$. This is a contradiction with the assumptions of the lemma which tells us that the boundaries of any two removed diagonal faces are nested, from which the sublemma follows. 
\end{proof} 
As Let $\mathcal{C}_1$ denote the set of boundaries of the faces of $D$. Let $\mathcal{C}_2$ denote the set of boundaries of the removed diagonal faces. Then the set $\mathcal{C}$ of boundaries of the faces of~$C$ satisfies $\mathcal{C}=\mathcal{C}_1\cup \mathcal{C}_2$. \autoref{boundaries_are_nested} gives us that the $1$-skeleton of $D$ together with $\mathcal{C}_2$ form a nested plane embedding. The $2$-complexes~$C$ and $D$ have the same $1$-skeleton and the elements of $\mathcal{C}_1$ are nested with any other element in $\mathcal{C}$, because they are boundaries of faces on the sphere $D$. These two facts and \autoref{boundaries_are_nested} together give us that the $1$-skeleton of~$C$, together with the boundaries of the faces of~$C$ form a nested plane embedding. From this it follows that~$C$ is outerspatial by \autoref{plane_to_outerspatial} as claimed, which finishes the proof.
\end{proof}
\subsection{Proof of \autoref{core_lemma} and \autoref{mainintro}.}
\begin{proof}[Proof of \autoref{core_lemma}:]
For the `only if' direction, assume that the 2-complex~$C$ has a subcomplex that is an aspherical 2-complex. As being 
outerspatial is closed under deletion of faces, it follows from \autoref{a-sphere-cone} that the 
2-complex~$C$ cannot be outerspatial.

Next assume that the 2-complex~$C$ contains a path $P$ such that the link graph at the vertex $P$ of $C/P$ is not outerplanar.  Since $L(P)$ is not outerplanar, it follows from \autoref{link_outerplanar} that $C/P$ is not outerspatial. Then by \autoref{contract_edge} it follows that~$C$ is also not outerspatial.

We proved that, if a $2$-complex~$C$ contains an aspherical subcomplex or a path $P$ such that the link graph at the vertex $P$ of $C/P$ is not outerplanar, then~$C$ is not outerspatial. This proves the `only if' direction as required.

For the `if' implication, consider a $2$-complex~$C$ which does not contain an aspherical $2$-complex as a subcomplex and does not contain a path $P$ such that the link graph at the vertex $P$ of $C/P$ is not outerplanar. Then we can first apply \autoref{diagonal_are_perfect} to obtain that all of its diagonal faces are perfectly diagonal. Next, since the result of \autoref{diagonal_are_perfect} completes the assumptions of \autoref{2-outerplanar_outerspatial}, we can apply the latter to obtain the final result.
\end{proof}

\begin{proof}[Proof of \autoref{mainintro}:]
\autoref{mainintro} states that a simple locally $2$-connected $2$-complex~$C$ is outerspatial if and only if it does not contain a surface of positive genus as a subcomplex or $2$-complex with non-outerplanar link graph as a space minor. \autoref{core_lemma} states that a simple locally $2$-connected $2$-complex~$C$ is outerspatial if and only if it does not contain an aspherical  $2$-complex as a subcomplex and it  does not contain a path $P$ such that the link graph at the vertex $P$ of $C/P$ is not outerplanar.

Looking at both of these statements we can see that they are of the form `a locally $2$-connected $2$-complex~$C$ is outerspatial if and only if it does not contain some forbidden structures'. So, to prove the implication we need to prove that the forbidden structures in \autoref{mainintro} are a subset of the forbidden structures in \autoref{core_lemma}.

Aspherical $2$-complexes are homeomorphic to surfaces of positive genus. Since contracting a path is a space minor operation, if a $2$-complex~$C$ contains a path $P$ such that $C/P$ has a non-outerplanar link graph, then it has a space minor with a non-outerplanar link graph.

The previous paragraph proves that the set of forbidden structures in \autoref{core_lemma} contains the set of forbidden structures in \autoref{mainintro} and thus \autoref{core_lemma} implies \autoref{mainintro} as claimed.
\end{proof}

\section{Further remarks on locally 2-connected outerspatial 2-complexes}\label{sec5}
We start this section by recalling the definition of an outerspherical topological space.
\begin{defn}
Take a graph embedded in a 2-sphere $S$ embedded as a 2-dimensional unit sphere in the Euclidean space $\mathbb{R}^3$. Glue discs to cycles of this graph inside the sphere so that they do not intersect each other in interior points, and they intersect the sphere $S$ precisely in the gluing cycles. Call a topological space that can be obtained in this way \emph{outerspherical}.
\end{defn}
Now we elaborate a bit further on this definition.
\begin{defn}
We call the discs glued to faces of the $1$-skeleton of the sphere \emph{outer} discs and all other discs we call \emph{inner} discs.
\end{defn}
\begin{defn}
The \emph{closure} of an outerspherical topological space $T$ is obtained by adding all missing outer discs.
\end{defn}

\begin{defn}
We will call an outerspherical topological space \emph{maximal} if it is equal to its closure.
\end{defn}
\begin{rem} \label{top_comp}
An outerspherical topological space induces a unique outerspatial $2$-complex by taking the $1$-skeleton of the $2$-complex to be the graph of the topological space and the faces of the $2$-complex to be the discs glued to cycles of this graph. Let a $2$-complex induced in such a way by an outerspherical topological space $T$ be denoted by $C(T)$.
\end{rem}
\begin{lem} \label{max_loc}
If $T$ is maximal, then $C(T)$ is locally $2$-connected.
\end{lem}
\begin{proof}
A $2$-complex~$C$, which can be embedded as a maximal outerspherical topological space is a subdivision of a sphere together with some additional faces. Since the link graphs of a subdivided sphere are cycles, which are $2$-connected, it follows that~$C$ is locally $2$-connected.
\end{proof} 
\begin{lem} \label{step1}
A simple outerspatial $2$-complex~$C$ is locally $2$-connected if and only if it has an embedding that is a maximal outerspherical topological space.
\end{lem}
\begin{proof}
For the `if' direction, let $T_C$ be an embedding of~$C$ that is a maximal outerspherical topological space. The result follows from \autoref{max_loc} and the fact that $C=C(T_C)$.

For the `only if' direction, consider a $2$-complex~$C$ which has an outerspherical embedding $T$ and note that $C=C(T)$. Let the closure of $T$ be denoted by $\overline{T}$. Suppose that $T$ is not maximal. This means that $\overline{T}\backslash T$ contains an outer disc, let one such disc be $f$. We have that $f$ is a face in $C(\overline{T})$. Let $v$ be one of the vertices of $f$ and let~$L(v)$ be the link graph of $v$ with respect to $C(\overline{T})$, we know by \autoref{link_outerplanar} that~$L(v)$ is outerplanar. A rotator at $v$ induces a Hamiltonian cycle $H$ in~$L(v)$ which is unique and bounds the outer face of~$L(v)$ by \autoref{polygonal}. In the graph~$L(v)$, let $f=ab$ and suppose that there are two paths $P_1$ and $P_2$ between $a$ and $b$ different from $ab$. The vertices of both of these paths are all vertices of $H$ due to the fact that $H$ is Hamiltonian. Since $H$ bounds the outer face of~$L(v)$, we have that all edges of $P_1$ are on the inside of $H$, thus the path $P_2$ lies on the inside of the face bounded by the cycle $P_1\cup f$, which leads to $P_1$ and $P_2$ intersecting internally. Therefore, the connectivity between $a$ and $b$ in the graph $L(v)\backslash ab$ is $1$ and so $L(v)\backslash ab$ is not $2$-connected. Hence, $C(\overline{T}\backslash f)$ is not locally $2$-connected and consequently $C(T)$ is also not locally $2$-connected. Since $C=C(T)$, it follows that~$C$ is not locally $2$-connected. With this, we proved the contrapositive of the `only if' statement and so we are done.
\end{proof}
\begin{defn}
The \emph{dual graph} of an outerspherical topological space is constructed in the following way. We have a vertex for each chamber apart from the outer chamber and two vertices are connected by an edge if their respective chambers share a disc.
\end{defn}
\begin{lem} \label{step2}
The dual graph of a maximal outerspherical topological space $T$ is a tree.
\end{lem}
\begin{proof}
We prove this by induction on the number of inner discs of $T$. When the number of inner discs is zero, we have a sphere, so the dual graph is a vertex which is a tree and thus the base case is true. Consider a maximal outerspherical topological space $T$ and suppose that all maximal outerspherical topological spaces with less inner discs have dual graphs that are trees.

Let $G$ be the dual graph of $T$ and suppose that we remove some inner disc $e$. Then the dual graph of $T-e$ is $G/e$. By the inductive hypothesis $G/e$ is a tree. Therefore, $G$ is also a tree, which completes the inductive step and thus completes the proof.
\end{proof}
\begin{prop} \label{locrem1}
Let~$C$ be a locally $2$-connected simple outerspatial $2$-complex. Then the dual graph of an embedding of~$C$ is a tree.
\end{prop}
\begin{proof}
Firstly note that the term dual graph of an embedding of~$C$ is well-defined by \autoref{plane_to_outerspatial}. From here, the result follows from \autoref{step1} and \autoref{step2}.
\end{proof}
\begin{cor} \label{uniq}
A simple outerspatial locally $2$-connected $2$-complex can be constructed by starting from a sphere and then gluing a sequence of spheres one by one at an already existing face.
\end{cor}

\begin{prop}
Every locally $2$-connected simple $2$-complex has a unique embedding up to combinatorial equivalence.
\end{prop}
\begin{proof}
By \autoref{uniq}, such a $2$-complex can be built by gluing a sequence of spheres at some faces. If we remove all the gluing faces we obtain a subdivided sphere. There is a unique way to embed the sphere and then we can embed back the gluing faces uniquely on the $1$-skeleton of this sphere. This shows that there exists a unique embedding of~$C$ (up to combinatorial equivalence).
\end{proof}

\begin{prop}
Every $n$-vertex locally $2$-connected simple outerspatial $2$-complex has at most $3n-6$ edges and at most $3n-8$ faces.
\end{prop}
\begin{proof}
Consider a locally $2$-connected simple outerspatial $2$-complex with $n$ vertices. That it has at most $3n-6$ edges follows from the fact that its $1$-skeleton is planar and from Euler's formula.

We will prove by induction on the number of spheres glued in \autoref{uniq} that there are at most $3n-8$ faces. 

For one sphere there are $n$ vertices and $2n-4$ faces. Since $n\geq 4$, we have that $3n-8 \leq 2n-4$ so the base case is true. Suppose that we have a $2$-complex with $n$ vertices at most $3n-8$ faces and we glue a sphere  with $m$ vertices and $2n-4$ faces at some face. The new number of vertices is $n+m-3$ and the new number of faces is at most $3n-8+2m-4-1=3n+2m-13$. Since $m\geq 4$, we have that $3n+2m-13\leq 3(n+m-3)-8=3n+3m-17$. This completes the inductive step and thus we proved that an $n$-vertex $2$-complex has at most $3n-8$ faces.
\end{proof}

\begin{lem} \label{lc}
Consider a $2$-complex~$C$ that is locally $2$-connected. Then the cone over it is locally $3$-connected.
\end{lem} 
\begin{proof}
For any vertex in~$C$, its link graph in $\widehat{C}$ is the ($1$-skeleton of the) cone over its link graph in~$C$. We know that if $G$ is $2$-connected, then (the $1$-skeleton of) its cone is $3$-connected, so the link graphs at the vertices in~$C$ are $3$-connected. Suppose that the link graph $L(t)$ of the top of the cone $t$  has a $2$-separator $\{tu,tv\}$. Then, restricting to~$C$, consider the link graph $L(u)$. If $uv \in E(C)$, then $v$ is a cutvertex in $L(u)$. If not, then $L(u)$ is disconnected. In either case, we have a contradiction with~$C$ being locally $2$-connected. If $L(t)$ has a $1$-separator $tu$, then $L(u)$ is disconnected when restricted to~$C$. This is again contradiction with~$C$ being locally $2$-connected. If $L(t)$ is disconnected, then so is~$C$ which is contradiction to   ~$C$ being simply-connected. We proved that $L(t)$ has no $0$-, $1$- or $2$-separators. Thus, the link graph at the top is also $3$-connected. All link graphs of $\widehat{C}$ are $3$-connected, therefore the cone over~$C$ is locally $3$-connected as required.
\end{proof}

\begin{prop}
Let~$C$ be a locally $2$-connected $2$-complex with $F$ faces. There exists an algorithm that checks in time linear in $F$ whether~$C$ is outerspatial.
\end{prop}

\begin{proof}
Given the locally $2$-connected $2$-complex~$C$ we can construct its cone $\widehat{C}$ in a linear time. $\widehat{C}$ is locally $3$-connected by \autoref{lc}. The methods of \cite{JC1} give an algorithm that checks in linear time whether a locally $3$-connected $2$-complex is embeddable. Given that algorithm we can check whether $\widehat{C}$ is embeddable in linear time. Since~$C$ is outerspatial if and only if $\widehat{C}$ is embeddable, this gives a linear algorithm that checks whether~$C$ is outerspatial.
\end{proof}

\bibliographystyle{plain}
\bibliography{References}

\end{document}